\documentclass[11pt]{amsart}
\usepackage{amssymb}
\usepackage{amsmath, amscd}
\usepackage{amsthm}
\usepackage[table,xcdraw]{xcolor}
\usepackage{ulem}
\usepackage{nicematrix}
\usepackage{tikz}
\usepackage{circuitikz}
\usepackage[colorlinks=true, linkcolor=blue,urlcolor=blue]{hyperref}

\newtheorem{theorem}{Theorem}[section]

\newtheorem{proposition}[theorem]{Proposition}
\newtheorem{lemma}[theorem]{Lemma}

\newtheorem{corollary}[theorem]{Corollary}
\theoremstyle{definition}

\newtheorem{example}[theorem]{Example}
\newtheorem{definition}[theorem]{Definition}

\newtheorem{remark}[theorem]{Remark}

\newtheorem{problem}[theorem]{Problem}

\newcommand{\Z}{\mathbb{Z}}
\newcommand{\Q}{\mathbb{Q}}

\newcommand{\Nil}{{\rm Nil}}
\newcommand{\ch}{{\rm char}}

\date\today

\begin{document}

\author[M.H. Bien]{Mai Hoang Bien\textsuperscript{1,2}}
\address{$^1$ Faculty of Mathematics and Computer Science, University of Science, Ho Chi Minh City, Vietnam; $^2$ Vietnam National University,
Ho Chi Minh City, Vietnam}
\email{mhbien@hcmus.edu.vn}
\author[P.V. Danchev]{Peter Vassilev Danchev\textsuperscript{3}}
\address{$^3$ Institute of Mathematics and Informatics, Bulgarian Academy of Sciences, 1113 Sofia, Bulgaria}
\email{danchev@math.bas.bg}
\author[M. Ramezan-Nassab]{Mojtaba Ramezan-Nassab\textsuperscript{4,5}}
\address{$^4$ (Corresponding author) Department of Mathematics, Kharazmi University, South Mofateh St. 15719-14911, Tehran, Iran; $^5$ School of Mathematics, Institute for Research in Fundamental Sciences (IPM), P.O. Box 19395-5746, Tehran, Iran}
\email{ramezann@khu.ac.ir}

\title[Generalized $t$-Fine Rings]{Generalized $t$-Fine and Quasi $t$-Fine Rings}
\keywords{(generalized) fine ring; (generalized) $t$-fine ring; (generalized) quasi $t$-fine ring; matrix ring; group ring}
\subjclass[2020]{16S34, 16S50, 16U60, 16U99}

\begin{abstract}
Fine rings and generalized fine rings have been extensively studied through additive decompositions involving units and nilpotent elements. In this paper, we develop analogous decompositions involving torsion units. We introduce and study {\it generalized $t$-fine} rings, in which every element outside the Jacobson radical is expressed as a sum of a torsion unit and a nilpotent element. We establish several basic properties of these rings and prove, in particular, that when the nilpotent elements form a subring, a ring is generalized $t$-fine if and only if it is local and every unit is torsion. We further investigate the behaviour of this property for matrix rings, endomorphism rings of finite abelian groups, and group rings, obtaining several structural and characterization results. We then introduce the broader class of {\it generalized quasi $t$-fine} rings by replacing nilpotent elements with quasinilpotent elements. We provide examples that illustrate the difficulties in characterizing this class, and investigate its behaviour for matrix rings and group rings.
\end{abstract}

\maketitle
\section{Introduction and Conventions}
In the present paper, all rings under consideration have an identity element, often denoted by $1$. All notation and terminology are standard, and new concepts are defined explicitly below. As usual, for a ring $R$, the Jacobson radical, the set of nilpotent elements, the center, and the group of units are denoted by ${\rm J}(R)$, $\Nil(R)$, $C(R)$, and $\mathcal{U}(R)$, respectively. The subset of $\mathcal{U}(R)$ consisting of torsion units is denoted by $\mathcal{T}(R)$. The ring of all $n\times n$ matrices over $R$ is denoted by $\mathbb{M}_n(R)$.

In \cite{cl}, C\v{a}lug\v{a}reanu and Lam introduced and studied the concept of \textit{fine} rings, defined by the condition $R\setminus \{0\}= \mathcal{U}(R)+\Nil(R)$. This defines a new (proper) class of simple rings that properly contains the class of simple Artinian rings. This notion was further generalized by Zhou in \cite{zhou} to so-called \textit{generalized fine} rings $R$, for which $R\setminus {\rm J}(R) = \mathcal{U}(R)+\Nil(R)$. An example of a generalized fine ring which is \textit{not} fine was also constructed. (Generalized) fine rings and their extensions have been extensively studied by various authors; see, e.g., \cite{czuun, cziun, nzmax, zhou, zhou2}.

In a similar vein, we investigated in \cite{semi-nil} a proper subclass of fine rings, named \textit{$t$-fine rings}, defined as those rings $R$ for which $R\setminus \{0\}= \mathcal{T}(R)+\Nil(R)$. In light of these viewpoints, we now expand $t$-fine rings to two types of their generalized versions, which we briefly review here.

In Section~\ref{sec 2}, we define the notion of a \textit{generalized $t$-fine} ring as a ring $R$ for which $R\setminus {\rm J}(R)=\mathcal{T}(R)+\Nil(R)$. We explore some elementary properties of generalized $t$-fine rings and, as a notable result, show that if $\Nil(R)$ is a subring of $R$, then $R$ is a generalized $t$-fine ring if and only if $R$ is a local ring with $\mathcal{U}(R)$ torsion (Theorem~\ref{NI local}). The matrix ring over a generalized $t$-fine ring $R$ is also shown to be again generalized $t$-fine, provided that either $R$ is a weakly 2-primal ring or an algebra over a field with at least three elements (see Theorems~\ref{matrix1} and~\ref{matrix2}). It is proved that the endomorphism ring of a finite abelian group $G$ is generalized $t$-fine if and only if $G$ is a direct product of $n$ copies of $\mathbb{Z}_{p^k}$ for some positive integers $k$ and $n$ (Theorem~\ref{endo}). Necessary and/or sufficient conditions for a group ring to be generalized $t$-fine are also provided in Theorem~\ref{group ring1} and Corollary~\ref{group ring nilpotent}.

In Section~\ref{sec 3}, we study a wider class than generalized $t$-fine rings, namely \textit{generalized quasi $t$-fine} rings, defined as rings $R$ for which $R\setminus {\rm J}(R)=\mathcal{T}(R)+\mathcal Q(R)$, where $\mathcal Q(R)$ denotes the set of quasinilpotents in $R$. We give a few examples that illustrate the difficulty of obtaining a more satisfactory description of the class of generalized quasi $t$-fine rings. Then we investigate the generalized quasi $t$-fine property in some ring extensions, such as matrix rings (in Proposition~\ref{matrix3}) and group rings (in Proposition~\ref{group ring3}).

\medskip
\section{\texorpdfstring{Generalized $t$-Fine Rings}{Generalized t-Fine Rings}}\label{sec 2}

Recall that for a ring $R$, $\mathcal U(R)+\Nil(R)\subseteq R\setminus{\rm J}(R)$, and if the equality holds, $R$ is called generalized fine (see \cite{zhou}). Motivated by this, we begin with the following definition.

\begin{definition}
A ring $R$ is called {\it generalized $t$-fine} if every element not lying in the Jacobson radical is the sum of a torsion unit and a nilpotent element, equivalently,
$$R\setminus {\rm J}(R)=\mathcal{T}(R)+\Nil(R).$$
\end{definition}

Thus, if ${\rm J}(R)=0$, the generalized $t$-fine property always coincides with $t$-fine property. In particular, any generalized $t$-fine von Neumann regular ring is necessarily $t$-fine.

Observe that \(\mathbb{Q}\) is an example of a (generalized) fine ring which is not (generalized) $t$-fine, while \(\mathbb{Z}_4\) is an example of a generalized $t$-fine ring that is not $t$-fine. Moreover, for a finite field $F$, the power series ring $F[[x]]$ is a generalized fine ring (by \cite[Corollary~2.11]{zhou}) which is neither fine  nor generalized $t$-fine (by Remark~\ref{remark 2.11} below). Thus, the classes of $t$-fine, fine, generalized $t$-fine, and generalized fine rings are related by the following proper inclusions, as illustrated in the diagram below:

\[
\begin{tikzpicture}[>=latex]
\node (TF) at (0,0) {$t$-fine};
\node (F)  at (-3,2) {fine};
\node (GTF) at (3,2) {generalized $t$-fine};
\node (GF) at (0,4) {generalized fine};

\draw[->] (TF) -- (F);
\draw[->] (TF) -- (GTF);
\draw[->] (F) -- (GF);
\draw[->] (GTF) -- (GF);
\end{tikzpicture}
\]


\medskip

We distribute our establishments into four subsections as presented in the sequel.

\medskip

\subsection{Preliminary Results}\label{sec2}

We start here with some basic results. The set of all unipotent elements in a ring $R$ is hereafter denoted by $\mathfrak{u}(R):=1+\Nil(R)$. So, $\mathfrak{u}(R)\subseteq \mathcal{U}(R)$ is always fulfilled.

\medskip

The following claim is analogous to \cite[Proposition 2.13]{zhou}.

\begin{lemma}
For a ring $R$, the following three statements hold:
\begin{itemize}
\item[(1)] $R$ is $t$-fine if and only if $R$ is generalized $t$-fine and ${\rm J}(R) = 0$.
\item[(2)] $R$ is generalized $t$-fine if and only if $R=(1+{\rm J}(R))\cup(\mathcal{T}(R)+\mathfrak{u}(R))$.
\item[(3)] $R$ is $t$-fine if and only if $R=\{1\}\cup(\mathcal{T}(R)+\mathfrak{u}(R))$.
\end{itemize}
\end{lemma}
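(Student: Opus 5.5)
The plan is to prove the three items in order, using only the definitions and the standard fact that $\mathcal{U}(R)+\Nil(R)\subseteq R\setminus {\rm J}(R)$. That fact holds because if $u+n\in{\rm J}(R)$ with $u$ a unit and $n$ nilpotent, then $n=-u+j$ with $j\in{\rm J}(R)$. Now $-u+j$ is a unit, since $u^{-1}j\in {\rm J}(R)$ gives $1-u^{-1}j$ invertible and $-u+j=-u(1-u^{-1}j)$. So $n$ would be both a nilpotent and a unit, which is impossible unless $R=0$; we exclude the zero ring or treat it trivially.

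For (1), note that $t$-fine means $R\setminus\{0\}=\mathcal{T}(R)+\Nil(R)$.
\begin{itemize}
\item If ${\rm J}(R)=0$, then $R\setminus{\rm J}(R)=R\setminus\{0\}$, so generalized $t$-fine coincides with $t$-fine, as already observed after the definition.
\item Conversely, suppose $R$ is $t$-fine and take $0\neq j\in{\rm J}(R)$. Then $j=t+n$ with $t\in\mathcal{T}(R)$, $n\in\Nil(R)$, which contradicts the fact above. Hence ${\rm J}(R)=0$, and then $R$ is generalized $t$-fine by the first bullet.
\end{itemize}

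For (2), the key is the translation $x\mapsto x+1$. We have $x\in R\setminus{\rm J}(R)$ if and only if $x+1\notin 1+{\rm J}(R)$, and $x=t+n$ with $t\in\mathcal{T}(R)$, $n\in\Nil(R)$ if and only if $x+1=t+(1+n)\in\mathcal{T}(R)+\mathfrak{u}(R)$. Thus $R\setminus{\rm J}(R)=\mathcal{T}(R)+\Nil(R)$ becomes, after adding $1$, the statement $R\setminus(1+{\rm J}(R))=\mathcal{T}(R)+\mathfrak{u}(R)$.
\begin{itemize}
\item Assuming this equality, we get $R=(1+{\rm J}(R))\cup(\mathcal{T}(R)+\mathfrak{u}(R))$.
\item Conversely, the union gives $R\setminus{\rm J}(R)\subseteq\mathcal{T}(R)+\Nil(R)$ after subtracting $1$. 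The reverse inclusion always holds by the preliminary fact, since $\mathcal{T}(R)\subseteq\mathcal{U}(R)$.
\end{itemize}
The only point needing care is this reverse inclusion: one must not need disjointness of the two pieces of the union. I handle it by proving the inclusion $\mathcal{T}(R)+\Nil(R)\subseteq R\setminus{\rm J}(R)$ independently, as above.

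For (3), I apply the same translation with ${\rm J}(R)$ replaced by $\{0\}$: $R\setminus\{0\}=\mathcal{T}(R)+\Nil(R)$ is equivalent to $R\setminus\{1\}=\mathcal{T}(R)+\mathfrak{u}(R)$. The forward direction is immediate. For the converse, $R=\{1\}\cup(\mathcal{T}(R)+\mathfrak{u}(R))$ gives $R\setminus\{0\}\subseteq\mathcal{T}(R)+\Nil(R)$. The reverse inclusion needs $0\notin\mathcal{T}(R)+\Nil(R)$, which holds because a unit is never $-$(nilpotent) in a nonzero ring; this is the special case of the preliminary fact with $j=0$.

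Alternatively, (3) follows from (1) and (2) once one checks that the condition in (3) forces ${\rm J}(R)=0$. I would use the direct argument instead, as it is cleaner. The main obstacle throughout is only bookkeeping: making sure the reverse inclusions rely on the unit-plus-nilpotent fact and not on the union decomposition.
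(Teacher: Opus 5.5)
Your proposal is correct and follows essentially the paper's argument. Item (1) comes straight from the definitions, and (2) and (3) come from translating by $1$; the paper uses $r\mapsto 1-r$ together with $-u\in\mathcal{T}(R)$ in the forward direction of (2), whereas you use $x\mapsto x+1$ throughout, and your explicit check that $\mathcal{T}(R)+\Nil(R)\subseteq R\setminus{\rm J}(R)$ just makes precise the standard inclusion the paper takes for granted.
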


\begin{proof}
Point (1) is obvious, so we omit details.

To see (2), choose $r\in R\setminus (1+{\rm J}(R))$. Then, $1-r\not\in {\rm J}(R)$. Since $R$ is generalized $t$-fine, we can write $1-r=u+a$ with $u\in \mathcal{T}(R)$ and $a\in \operatorname{Nil}(R)$. Hence, one inspects that
$$r=(-u)+(1-a)\in \mathcal{T}(R)+\mathfrak{u}(R),$$ as required.

Conversely, choose $x\in R\setminus {\rm J}(R)$. Thus, $1+x\not\in 1+{\rm J}(R)$. Hence, $1+x=v+b$ for some $v\in \mathcal{T}(R)$ and $b\in \mathfrak{u}(R)$. Therefore, $x=v+(b-1)$ and $b-1\in \operatorname{Nil}(R)$. That is why, $R$ is a generalized $t$-fine ring, as needed.

Finally, (3) can be verified similarly to (2).
\end{proof}


\begin{lemma}\label{char}
If $R$ is a generalized $t$-fine ring, then $\operatorname{char}(R)>0$.
\end{lemma}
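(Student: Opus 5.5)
The plan is to argue by contradiction. Suppose $\ch(R)=0$, so that the subring $\Z\cdot 1$ of $R$ is a copy of $\Z$, and in particular no nonzero integer multiple of $1$ is zero in $R$. The idea is to show that, under this hypothesis, every integer $k\neq\pm1$ must lie in ${\rm J}(R)$. Taking $k=2$ and $k=3$ then gives $1=3-2\in{\rm J}(R)$, which is absurd.

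\textbf{Key step: integers $k\ne\pm1$ lie in ${\rm J}(R)$.} Fix an integer $k$ with $|k|\ge 2$ and suppose $k\cdot 1\notin {\rm J}(R)$.
\begin{enumerate}
\item By the generalized $t$-fine hypothesis, write $k=u+a$ with $u\in\mathcal{T}(R)$ and $a\in\Nil(R)$. Choose $n\ge1$ with $u^n=1$.
\item Since $k$ is central, $k$ and $a$ commute. Expanding $u^n=(k-a)^n$ by the binomial theorem gives
$$1=u^n=k^n+a\,p(a)$$
for some polynomial $p$ with integer coefficients.
\item Hence $k^n-1=-a\,p(a)$. Because $a$ is nilpotent and commutes with $p(a)$, the element $a\,p(a)$ is nilpotent. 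So $(k^n-1)^m=0$ in $R$ for some $m\ge1$.
\item But $(k^n-1)^m$ is a nonzero integer, since $|k|\ge2$ forces $k^n\neq1$. This contradicts $\ch(R)=0$.
\end{enumerate}
Therefore $k\cdot1\in{\rm J}(R)$ for every integer $k$ with $|k|\ge 2$.

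\textbf{Conclusion.} Applying the key step with $k=2$ and $k=3$ gives $2,3\in{\rm J}(R)$. Since ${\rm J}(R)$ is an additive subgroup, $1=3-2\in{\rm J}(R)$, which is impossible. Hence $\ch(R)>0$.

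\textbf{Main obstacle.} The step needing the most care is step 3 of the key step: justifying that $k^n-1$ is nilpotent. This relies on the commutation of $k$ and $a$, which holds because integer multiples of $1$ are central; that centrality is exactly why the argument is run with integers rather than arbitrary elements. The remaining ingredients are elementary facts about the additive group ${\rm J}(R)$ and the fact that ${\rm J}(R)$ is a proper subset of $R$.
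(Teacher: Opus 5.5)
Your proof is correct and follows essentially the same route as the paper. The paper also uses the fact that two consecutive integers (it takes $3$ and $4$) cannot both lie in ${\rm J}(R)$, and then defers to the argument of \cite[Lemma~1]{semi-nil} for the remaining step. You carry that step out explicitly: you decompose an integer $k$ with $|k|\ge 2$ outside ${\rm J}(R)$ as a torsion unit plus a commuting nilpotent, which forces the nonzero integer $(k^n-1)^m$ to vanish in $R$. Your choice of $2,3$ works equally well. The contradiction framing could even be dropped, since the key step shows directly that $\ch(R)$ divides $(k^n-1)^m$.
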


\begin{proof}
Note that at least one of the elements $3$ and $4$ does not belong to ${\rm J}(R)$. Now, utilizing the proof of \cite[Lemma~1]{semi-nil}, we conclude that $\operatorname{char}(R)>0$, as stated.
\end{proof}


\begin{lemma}\label{unipotent}
If $R$ is a generalized $t$-fine ring, then $\mathfrak{u} (R)\subseteq \mathcal{T}(R)$.
\end{lemma}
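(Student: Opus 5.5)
Since Lemma~\ref{char} gives $\operatorname{char}(R)=n>0$, the plan is to show that every unipotent $1+a$ with $a\in\Nil(R)$ has finite order by a binomial computation. This does not actually use the generalized $t$-fine hypothesis beyond the positivity of the characteristic.

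Write $n=p_1^{e_1}\cdots p_k^{e_k}$ and let $a^m=0$. We take $N=n^m$; any sufficiently large power of $n$ divisible by enough powers of each prime would also work. The first step is to reduce to prime-power characteristic. Since $n\cdot 1=0$, the ring decomposes as $R\cong \prod_i R_i$ with $\operatorname{char}(R_i)=p_i^{e_i}$, via the central idempotents coming from the Chinese remainder theorem for $\Z_n\subseteq C(R)$. Nilpotent and unipotent elements decompose componentwise, so it suffices to treat $\operatorname{char}(R)=p^e$.

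In that case we compute $(1+a)^{p^s}=\sum_{j}\binom{p^s}{j}a^j$, where only $j<m$ contribute. By the standard fact $v_p\binom{p^s}{j}=s-v_p(j)\ge s-\log_p j$, choosing $s\ge e+\log_p m$ (for instance $s=e+m$) makes $p^e$ divide every $\binom{p^s}{j}$ with $1\le j<m$. Hence $(1+a)^{p^s}=1$, so $1+a\in\mathcal{T}(R)$.

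An alternative that avoids the product decomposition is to iterate: if $\operatorname{char}(R)=n$, then $(1+a)^n=1+na\,b$ with $b$ a polynomial in $a$. Repeating, $(1+a)^{n^k}=1+n^k c_k$, where $c_k$ commutes with $a$, is nilpotent, and has strictly increasing $a$-adic order; once $k\ge 1$, the term $n^k c_k$ vanishes since $n\cdot 1=0$. In fact $(1+a)^n=1+na+\binom n2a^2+\dots$, where the first-order term $na=0$ but the higher binomials need not vanish. So the cleaner route is the prime-power one above.

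The main obstacle is only the valuation estimate for binomial coefficients, together with the Chinese remainder splitting. Both are standard, so the lemma reduces essentially to Lemma~\ref{char}.
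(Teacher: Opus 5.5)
Your proof is correct and follows the paper's route: the paper also uses Lemma~\ref{char} to get positive characteristic and then cites \cite[Theorem~3.2]{unipotent} for the fact that unipotents are torsion in positive characteristic, and your CRT splitting together with the estimate $v_p\binom{p^s}{j}=s-v_p(j)$ proves that cited fact directly. You should delete the ``alternative'' paragraph, since its claim $(1+a)^{n^k}=1+n^kc_k$ is false for composite $n$ (in characteristic $4$, $(1+a)^4=1+2a^2+a^4$); if you want to avoid the splitting, the bound $v_p\binom{N}{j}\ge v_p(N)-v_p(j)$, which follows from $j\binom{N}{j}=N\binom{N-1}{j-1}$, already shows that your stated $N=n^m$ works.
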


\begin{proof}
This follows at once from Lemma~\ref{char} and \cite[Theorem~3.2]{unipotent}.
\end{proof}


The following consequence is somewhat parallel to \cite[Lemma 2.8]{zhou}.

\begin{corollary}\label{center}
If $R$ is a generalized $t$-fine ring, then $C(R)$ is a local ring and $\mathcal{U}(C(R))$ is a torsion group.
\end{corollary}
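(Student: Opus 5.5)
Let $c\in C(R)$. We will show that $c$ is either a torsion unit of $R$ (and hence of $C(R)$) or lies in ${\rm J}(R)$. The key observation is that a central element commutes with everything. So if $c\notin {\rm J}(R)$, we write $c=u+a$ with $u\in\mathcal{T}(R)$ and $a\in\Nil(R)$. We cannot assume that $u$ and $a$ commute with each other, so instead we use the decomposition differently.

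The plan is to mimic \cite[Lemma~2.8]{zhou}. Since $c$ is central, $c-a=u$ is a unit, and we want to conclude that $c$ itself is a unit. Following Zhou's argument, we consider $cR=Rc$, a two-sided ideal. Suppose $c\notin\mathcal U(R)$. Then $cR$ is a proper ideal (a central element with a one-sided inverse is invertible). Next we show that $cR\subseteq {\rm J}(R)$. Take any $x\in cR$ and $r\in R$; it suffices to show $1-xr$ is a unit. Suppose instead that some $y=cs\notin{\rm J}(R)$. Then $y=v+b$ with $v$ a unit and $b$ nilpotent.

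Zhou's trick is to write the element $1\notin{\rm J}(R)$ in a clever form. A cleaner route is as follows: for central $c\notin{\rm J}(R)$ there is $r$ with $1-rc$ not a unit. Then $1-rc\notin{\rm J}(R)$ either, because $rc\notin 1+{\rm J}(R)$ would otherwise force $rc$ to be a unit, and hence $c$ to be a unit. So $1-rc\in\mathcal{T}(R)+\Nil(R)$. At this point one uses that $R$ is generalized fine, which follows since $\mathcal T(R)\subseteq\mathcal U(R)$, and then invokes \cite[Lemma~2.8]{zhou} directly: for a generalized fine ring $R$, the center $C(R)$ is local. This is the main step. Citing Zhou avoids redoing the ideal argument, so I would simply cite it.

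It remains to show that $\mathcal U(C(R))$ is torsion. Let $c\in\mathcal U(C(R))$, so $c\notin{\rm J}(R)$ and $c=u+a$ with $u\in\mathcal T(R)$ and $a\in\Nil(R)$. Then $u=c-a$, and hence $c^{-1}u=1-c^{-1}a$. Here $c^{-1}$ is central, so $c^{-1}a$ is nilpotent, and therefore $c^{-1}u\in\mathfrak u(R)$. By Lemma~\ref{unipotent}, $c^{-1}u$ is torsion, say $(c^{-1}u)^m=1$. Since $c$ is central, this gives $c^{-m}u^m=1$. Choosing $k$ with $u^k=1$, we get $c^{mk}=u^{mk}=1$. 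Hence $c$ is torsion. This commutation step, which uses centrality so that $c^{-1}$ commutes with $u$, is straightforward. The main obstacle is the locality of $C(R)$, which is handled by Zhou's lemma.
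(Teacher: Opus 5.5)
Your proof is correct, but it is organized differently from the paper's, and one remark in it is false. You write that ``we cannot assume that $u$ and $a$ commute.'' In fact they must commute, because $c$ is central:
\[
ua=(c-a)a=ca-a^2=ac-a^2=a(c-a)=au .
\]
This one-line observation is the entire content of the paper's proof.

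The paper takes $r\in C(R)\setminus{\rm J}(C(R))$ and first checks that $r\notin{\rm J}(R)$. Indeed, some $1-rx$ with $x\in C(R)$ is a nonunit of $C(R)$, hence a nonunit of $R$, since inverses of central units are central. It then writes $r=u+q$ with $u\in\mathcal T(R)$ and $q\in\Nil(R)$, and uses the commutation above to see that $1+u^{-1}q\in\mathfrak u(R)\subseteq\mathcal T(R)$ by Lemma~\ref{unipotent}. So $r=u(1+u^{-1}q)$ is a product of two commuting torsion units. Every element of $C(R)\setminus{\rm J}(C(R))$ is therefore a torsion unit, which gives locality and torsion at once, with no external input.

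You instead obtain locality by noting that generalized $t$-fine implies generalized fine and citing \cite[Lemma~2.8]{zhou]}. This is legitimate, and it matches how the paper itself describes that lemma. You then prove torsion separately by normalizing with $c^{-1}$ rather than $u^{-1}$: $c^{-1}u=1-c^{-1}a$ is unipotent, so $c^m=u^m$ and hence $c^{mk}=1$. That step is valid and is essentially a mirror image of the paper's computation. It also already uses the same centrality fact you claimed was unavailable.

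Your route costs you a dependence on an outside lemma for what you call the main step. The commutation observation makes that lemma unnecessary, since it shows directly that every central element outside ${\rm J}(R)$ is a unit. The exploratory passages about the ideal $cR$ and the ``cleaner route'' through $1-rc$ never reach a conclusion and should be deleted.
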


\begin{proof}
First, note that $\mathcal{U}(C(R))=\mathcal{U}(R)\cap C(R)$. Let $r\in C(R)\setminus {\rm J}(C(R))$. Then, there exists $x\in C(R)$ such that $1-rx$ is not a unit in $C(R)$, and hence it is not a unit in $R$. Therefore, $r\not\in {\rm J}(R)$.

Furthermore, since $R$ is generalized $t$-fine, we can write $r=u+q$, where $u\in \mathcal{T}(R)$ and $q\in \Nil(R)$. Since $r$ is central, $u$ commutes with $q$. Hence, $1+u^{-1}q$ is a unipotent element, and by Lemma~\ref{unipotent} it must be that $1+u^{-1}q\in \mathcal{T}(R)$. Therefore, $r=u(1+u^{-1}q)$ is a product of two commuting torsion elements whence $r\in \mathcal{T}(R)$. Since $r$ is central, it follows that $r\in \mathcal{T}(C(R))$. Consequently, every element of $C(R)\setminus {\rm J}(C(R))$ is a torsion unit, and so  $C(R)$ is local and $\mathcal{U}(C(R))$ is a torsion group, as expected.
\end{proof}


We now explore a more general setting of the above result. First, we record the following auxiliary lemma.

\begin{lemma}\label{local1}
Let $R$ be a local ring of positive characteristic. If $x\in {\rm J}(R)$ and $1+x\in \mathcal{T}(R)$, then $x\in \Nil(R)$.
\end{lemma}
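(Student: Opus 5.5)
The plan is to reduce to the case where the order of $1+x$ is a power of the characteristic's prime, and then use that binomial coefficients $\binom{p^s}{i}$ with $0<i<p^s$ are divisible by $p$, which is a central nilpotent element.

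First I pin down the characteristic. Since $R$ is local, $R/{\rm J}(R)$ is a division ring, so its characteristic is a prime $p$. Hence $p\cdot 1\in{\rm J}(R)$, and every integer prime to $p$ is a unit in $R$. Let $n=\ch(R)>0$ and write $n=p^k q$ with $\gcd(q,p)=1$. Then $q$ is a unit, so $p^k\cdot 1=0$. In particular $p\cdot 1$ is a central nilpotent element.

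Next I reduce the order of $1+x$. Let $(1+x)^m=1$ and write $m=p^s t$ with $\gcd(t,p)=1$. Put
$$1+z=(1+x)^{p^s},\qquad z=\sum_{i=1}^{p^s}\binom{p^s}{i}x^i .$$
Then $z\in{\rm J}(R)$ and $z$ commutes with $x$. From $(1+z)^t=1$ we get
$$z\bigl(t+\tbinom{t}{2}z+\cdots+z^{t-1}\bigr)=0 .$$
The bracket is $t$ plus an element of ${\rm J}(R)$. Since $t$ is a unit, the bracket is a unit, so $z=0$, that is, $(1+x)^{p^s}=1$. This step is the main point: it is what allows the $p$-power binomial argument below. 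The only subtlety is checking that $t$ is invertible, which was settled in the first step.

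Now expand $(1+x)^{p^s}=1$ to get
$$x^{p^s}=-\sum_{0<i<p^s}\binom{p^s}{i}x^i = p\,w,$$
using that $p$ divides each $\binom{p^s}{i}$ for $0<i<p^s$. Here $w$ is an integer polynomial in $x$, so $w$ commutes with $x$. Since $p\cdot 1$ is central, we obtain
$$x^{p^s k}=(p\,w)^k=p^k w^k=0 .$$
Therefore $x\in\Nil(R)$. If $s=0$, the reduction step already gives $z=x=0$.
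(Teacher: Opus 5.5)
Your proof is correct and complete. It uses the same two ingredients as the paper, but applies them in the opposite order. The first ingredient is that for an exponent $t$ prime to $p$ and $w\in{\rm J}(R)$, $(1+w)^t-1$ equals $w$ times a unit. The second is that a $p$-power order forces $w^{p^s}\in pR$, hence $w$ is nilpotent because $p^k=0$.

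The paper raises $1+x$ to the $p'$-part $s$ of its order. It shows $y=(1+x)^s-1$ is nilpotent via the Frobenius congruence modulo $pR$. It then transfers nilpotency back to $x$ through $y=x\,q(x)$, where $q(x)=s+j$ is a unit commuting with $x$. You instead raise to the $p$-part, use the unit factorization to show that $z=(1+x)^{p^s}-1$ is exactly $0$, and run the binomial argument on $x$ itself.

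Your ordering gives the slightly sharper intermediate fact that every torsion element of $1+{\rm J}(R)$ has $p$-power order, and it avoids the final step of pulling nilpotency from $y$ back to $x$. You also justify explicitly that $\ch(R)$ is a power of $p$ and that $p\cdot 1$ is central and nilpotent. The paper simply assumes this when it writes $\ch(R)=p^k$.
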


\begin{proof}
Suppose that $\operatorname{char}(R)=p^k$ for some prime $p$ and integer $k\ge1$. Since $1+x\in \mathcal{T}(R)$, there exists a positive integer $m$ such that
\[
(1+x)^m=1.
\]
Write $m=p^rs$, where $r\ge0$, $s\ge1$, and $\gcd(s,p)=1$. Put
\[
y:=(1+x)^s-1.
\]
Then,
\[
(1+y)^{p^r}=\bigl((1+x)^s\bigr)^{p^r}=1.
\]

We first show that $y$ is nilpotent. Notice that
\[
(1+y)^p\equiv 1+y^p \pmod{pR}.
\]
Iterating this congruence yields
\[
1=(1+y)^{p^r}\equiv 1+y^{p^r}\pmod{pR}.
\]
Thus, $y^{p^r}\in pR$.
Hence, there exists $a\in R$ such that $y^{p^r}=pa$. Therefore,
\[
y^{p^rk}=(y^{p^r})^k=(pa)^k=p^ka^k=0,
\]
because $\operatorname{char}(R)=p^k$. So, $y$ is nilpotent.

Next,
\[
y=(1+x)^s-1=xq(x),
\]
where
\[
q(x)=1+(1+x)+(1+x)^2+\cdots +(1+x)^{s-1}=s+j
\]
for some $j\in {\rm J}(R)$. As $p\in {\rm J}(R)$ and $\gcd(s,p)=1$, it must be that $s\not\in {\rm J}(R)$. Hence, $s\in \mathcal{U}(R)$, and so $q(x)\in \mathcal{U}(R)$. It thus follows that $x=yq(x)^{-1}$ is also nilpotent, as asked.
\end{proof}


The following observation is crucial. Recall that a ring $R$ is termed {\it NR}, provided that $\Nil(R)$ forms a subgroup of $R$.

\begin{lemma}\label{NR}
If $R$ is an NR generalized fine ring, then $\Nil(R)\subseteq {\rm J}(R)$.
\end{lemma}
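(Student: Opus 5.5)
The argument is by contradiction and uses only the definition of a generalized fine ring together with the NR hypothesis. No earlier structural results of the paper are needed.

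Assume $R\neq 0$; otherwise the claim is vacuous. Let $a\in\Nil(R)$ and suppose, towards a contradiction, that $a\notin {\rm J}(R)$. Because $R$ is generalized fine, i.e. $R\setminus{\rm J}(R)=\mathcal U(R)+\Nil(R)$, we can write
$$a=u+b,\qquad u\in\mathcal U(R),\ b\in\Nil(R).$$

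The key step is to solve for the unit: $u=a-b$. Since $R$ is NR, $\Nil(R)$ is an additive subgroup of $R$. As $a,b\in\Nil(R)$, this gives $u=a-b\in\Nil(R)$.

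A unit cannot be nilpotent in a nonzero ring. Indeed, if $u^n=0$ with $u$ invertible, multiplying by $u^{-n}$ gives $1=0$. This contradiction shows $a\in{\rm J}(R)$, and hence $\Nil(R)\subseteq{\rm J}(R)$.

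There is no real obstacle here. The only point to watch is that closure of $\Nil(R)$ under subtraction is exactly what the NR hypothesis supplies. In a general ring the difference of two nilpotents need not be nilpotent, and the argument fails.
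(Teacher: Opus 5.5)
Your proof is correct and is essentially the paper's own argument: write a non-radical nilpotent $a$ as $u+b$ with $u\in\mathcal U(R)$, $b\in\Nil(R)$, then use the NR hypothesis to get $u=a-b\in\Nil(R)$, which is impossible. Your remarks on the zero ring and on why a nilpotent unit forces $1=0$ just spell out details the paper leaves implicit.
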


\begin{proof}
Choosing $a\in \Nil(R)\setminus {\rm J}(R)$, we may write $a=u+b$, where $u\in \mathcal{U}(R)$ and $b\in \Nil(R)$, whence $u=a-b\in \Nil(R)$ that is impossible. This substantiates our claim.
\end{proof}


Our first main result is the following.

\begin{theorem}\label{NI local}
Let $R$ be an NR ring. Then, $R$ is a generalized $t$-fine ring if and only if $R$ is a local ring with $\mathcal{U}(R)$ torsion; in this case, ${\rm J}(R)$ is nil.
\end{theorem}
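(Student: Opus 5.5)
For the forward direction, we will use Lemma~\ref{NR} together with the decomposition. Let $R$ be NR and generalized $t$-fine; in particular $R$ is generalized fine, so Lemma~\ref{NR} gives $\Nil(R)\subseteq {\rm J}(R)$. Take $r\in R\setminus{\rm J}(R)$ and write $r=u+q$ with $u\in\mathcal{T}(R)$ and $q\in\Nil(R)\subseteq{\rm J}(R)$. Then $r=u(1+u^{-1}q)$, and $u^{-1}q\in{\rm J}(R)$, so $1+u^{-1}q$ is a unit. Hence $r$ is a unit, and every element outside ${\rm J}(R)$ is invertible, which means $R$ is local.

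It remains to show that $\mathcal{U}(R)$ is torsion. Here we cannot simply cite the commuting-product argument of Corollary~\ref{center}, since $u$ and $q$ need not commute. We argue as follows. Every unit $r$ satisfies $r=u(1+u^{-1}q)$ with $1+u^{-1}q\in 1+{\rm J}(R)$. So it suffices to show that $1+{\rm J}(R)$ consists of torsion units and that the resulting products are torsion.

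First we show ${\rm J}(R)$ is nil. Take $x\in{\rm J}(R)$. Then $1+x\notin{\rm J}(R)$, so $1+x=v+b$ with $v\in\mathcal{T}(R)$ and $b\in\Nil(R)\subseteq{\rm J}(R)$. Thus $v=1+(x-b)$ with $x-b\in{\rm J}(R)$ torsion. By Lemma~\ref{char}, $R$ is local of positive characteristic, so Lemma~\ref{local1} gives $x-b\in\Nil(R)$. Since $\Nil(R)$ is additively closed (NR), $x=(x-b)+b\in\Nil(R)$. Hence ${\rm J}(R)=\Nil(R)$ is nil; this proves the final assertion of the theorem.

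Consequently $1+{\rm J}(R)=\mathfrak{u}(R)\subseteq\mathcal{T}(R)$ by Lemma~\ref{unipotent}. For the torsion of $u(1+j)$ with $u$ torsion and $j\in{\rm J}(R)$, we pass to the field $R/{\rm J}(R)$. In it, $\bar r=\bar u$, so $\bar r^m=1$ where $m$ is the order of $u$. Then $r^m\in 1+{\rm J}(R)$, which is torsion, so $r$ is torsion. This step, getting torsion without commutativity, is where the main care is needed, and passing to the quotient resolves it.

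For the converse, assume $R$ is local with $\mathcal{U}(R)$ torsion. Then $R\setminus{\rm J}(R)=\mathcal{U}(R)=\mathcal{T}(R)=\mathcal{T}(R)+0\subseteq\mathcal{T}(R)+\Nil(R)$. The reverse inclusion always holds. Indeed, in a local ring any $t+n$ with $n$ nilpotent and $t\in{\rm J}(R)$ would force $t$ to be both a unit and in ${\rm J}(R)$, which is impossible; more simply, $\mathcal{U}(R)+\Nil(R)\subseteq R\setminus{\rm J}(R)$ as recalled at the start of the section. So $R$ is generalized $t$-fine. Moreover, the nilness of ${\rm J}(R)$ in this case follows as above: it uses only the generalized $t$-fine property together with NR.
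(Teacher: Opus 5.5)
Your proof is correct and follows the paper's argument essentially step by step. You get locality from $\Nil(R)\subseteq{\rm J}(R)$ (Lemma~\ref{NR}), nilness of ${\rm J}(R)$ from Lemmas~\ref{char} and~\ref{local1} together with the NR property, and torsion of a unit $r=u+q$ from $r^m\in 1+{\rm J}(R)=\mathfrak{u}(R)\subseteq\mathcal{T}(R)$; passing to $R/{\rm J}(R)$ here is just a repackaging of the paper's expansion $(w+s)^k=1+s'$. One small slip: a priori $R/{\rm J}(R)$ is only a division ring, not a field, but you never use commutativity, so the argument is unaffected.
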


\begin{proof}
Suppose that $R$ is a generalized $t$-fine ring. Let $a\not\in {\rm J}(R)$. Then, $a=u+n$, where $u\in \mathcal T(R)$  and $n\in\Nil(R)$. Since Lemma~\ref{NR} tells us that $\Nil(R)\subseteq {\rm J}(R)$, we have $n\in {\rm J}(R)$. Hence, $a$ is a unit. Thus, $R$ is a local ring.

Now, let $b\in {\rm J}(R)$.  Since $R$ is generalized $t$-fine, we may write $1+b=v+q$, where $v\in \mathcal T(R)$  and $q\in \Nil(R)\subseteq{\rm J}(R)$. 
Set $x:=b-q$. Then, $x\in {\rm J}(R)$ and $1+x=v$ 
is a torsion unit. Owing to Lemmas~\ref{char} and \ref{local1}, $x\in\Nil(R)$. Then, $b=x+q$,
where both $x$ and $q$ are nilpotent elements. But, since $R$ is an NR ring, $b$ is nilpotent. Consequently, ${\rm J}(R)$ is a nil-ideal.

Finally, let $c\in \mathcal{U}(R)$. Write $c=w+s$, where $w\in \mathcal{T}(R)$ and $s\in \Nil(R)$. Suppose now that $w^k=1$ for some positive integer $k$. Then,
\[
c^k=(w+s)^k=1+s'
\]
for some nilpotent element $s'\in {\rm J}(R)$. Thus, $c^k$ is a unipotent unit. Referring to Lemma~\ref{unipotent}, one extracts that $c^k$ is a torsion unit. Hence, $c$ is also a torsion unit, as pursued.  This proves necessity.

The converse is immediate, completing the entire proof.
\end{proof}


We now show the following strengthening of \cite[Lemma~2.10]{zhou}.

\begin{lemma}\label{factor}
Let $R$ be a ring, and let $I$ be an ideal of $R$.
\begin{itemize}
    \item[(i)] If $R$ is a generalized $t$-fine ring, then so is $R/I$.
    \item[(ii)] If $I$ is a nil-ideal and $R/I$ is a generalized $t$-fine ring, then so is $R$.
\end{itemize}
\end{lemma}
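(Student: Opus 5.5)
For (i), I would work with the projection $\pi\colon R\to \bar R:=R/I$, $r\mapsto\bar r$, and use two standard facts. First, $\pi({\rm J}(R))\subseteq {\rm J}(\bar R)$. Second, $\pi$ maps torsion units to torsion units and nilpotents to nilpotents. Let $\bar a\in \bar R\setminus {\rm J}(\bar R)$ with representative $a\in R$. Then $a\notin {\rm J}(R)$, since otherwise $\bar a\in\pi({\rm J}(R))\subseteq{\rm J}(\bar R)$. By hypothesis $a=u+n$ with $u\in\mathcal T(R)$ and $n\in\Nil(R)$, so $\bar a=\bar u+\bar n$ with $\bar u^m=\bar 1$ and $\bar n$ nilpotent. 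This gives the required decomposition. I would check $\pi({\rm J}(R))\subseteq{\rm J}(\bar R)$ directly: if $j\in{\rm J}(R)$, then for every $\bar x$ the element $1-jx$ is a unit of $R$, so $\bar1-\bar j\bar x$ is a unit of $\bar R$.

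For (ii), assume $I$ is nil and $\bar R$ is generalized $t$-fine. The key standard fact is that for a nil ideal $I$ we have $I\subseteq{\rm J}(R)$ and ${\rm J}(\bar R)={\rm J}(R)/I$. This holds because units lift modulo a nil ideal: if $\bar v$ is a unit with $\bar v\bar w=\bar1$, then $vw=1-i$ and $wv=1-i'$ with $i,i'$ nilpotent, so $vw$ and $wv$ are units and hence $v$ is a unit. Now take $a\in R\setminus{\rm J}(R)$. Then $\bar a\notin{\rm J}(\bar R)$, so $\bar a=\bar u+\bar n$ with $\bar u^m=\bar1$ and $\bar n^k=0$. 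Lift these to $u,n\in R$ and put $a=u+n+i$ with $i\in I$.

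The lifting step is where I expect the real work. The element $n$ is nilpotent because $n^k\in I$ is nil, and $u$ is a unit by the unit-lifting remark above. However, $u$ need not be torsion: all we know is $u^m=1+i_0$ with $i_0\in I$ nilpotent, so $u^m$ is unipotent. To control this I would invoke Lemma~\ref{char}, applied to $\bar R$, which gives $\operatorname{char}(\bar R)=c>0$. Then $c\cdot1\in I$ is nilpotent, so $\operatorname{char}(R)>0$. Lemma~\ref{unipotent} is stated only for generalized $t$-fine $R$, so it cannot be used here; instead I would rely on the result it cites, \cite[Theorem~3.2]{unipotent}, which says unipotents in positive characteristic are torsion. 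Hence $u^m$ is torsion, and therefore $u\in\mathcal T(R)$.

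It remains to absorb $i$. I would write $a=u+(n+i)$, but $n+i$ need not be nilpotent, so this does not work directly. Instead I would absorb $i$ into the unit: write $a=(u+i)+n$ and show $u+i\in\mathcal T(R)$. Note that $u+i=u(1+u^{-1}i)$ is a unit, and $\overline{u+i}=\bar u$ is torsion, so $(u+i)^m\in 1+I$ is unipotent. The same positive-characteristic argument as above then shows $u+i$ is a torsion unit. This yields $a\in\mathcal T(R)+\Nil(R)$, so $R$ is generalized $t$-fine.
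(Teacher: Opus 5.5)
Your proof is correct and is essentially the paper's argument: you lift the decomposition from $R/I$, and you use $\operatorname{char}(R)>0$ together with \cite[Theorem~3.2]{unipotent} to see that any lift of a torsion unit is a torsion unit. Your explicit derivation of $\operatorname{char}(R)>0$ from $\operatorname{char}(R/I)>0$ and the nilness of $I$ is in fact more careful than the paper, which invokes Lemma~\ref{unipotent} for $R$ even though that lemma presupposes $R$ is already generalized $t$-fine.

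One correction: $n+i$ \emph{is} nilpotent, because it is a lift of the nilpotent $\bar n$ modulo the nil ideal $I$ (exactly your argument for $n$). So the paper simply writes $a=u+(n+i)$, and your detour of absorbing $i$ into the unit, while valid, is unnecessary.
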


\begin{proof}
(i) Let $\bar a\in R/I\setminus {\rm J}(R/I)$. Then, $a\not\in {\rm J}(R)$, and hence there exist $u\in \mathcal{T}(R)$ and $q\in \Nil(R)$ such that $a=u+q$. Therefore, $\bar a=\bar u+\bar q$ is a generalized $t$-fine decomposition in $R/I$, as required.

(ii) Let $a\in R\setminus {\rm J}(R)$. Since $I\subseteq {\rm J}(R)$, we infer $\bar a\not\in {\rm J}(R/I)$. Thus,
\[
\bar a=\bar u+\bar q,
\]
where $\bar u$ is a torsion unit in $R/I$ and $\bar q$ is a nilpotent in $R/I$. Since $I$ is nil, it follows that $q\in \Nil(R)$. Therefore,
$$a=u+(q+b)$$
for some $b\in I$. As $q+b\in\Nil(R)$, it suffices to show that $u\in \mathcal{T}(R)$.

Suppose next that $\bar u^t=\bar1$ for some positive integer $t$. Then, $u^t=1+c$ for some nilpotent element $c\in I$. Employing  Lemma~\ref{unipotent}, we conclude that $u\in \mathcal{T}(R)$, as asked.
\end{proof}


Using the above lemma, we easily derive the following results (compare also with \cite[Corollary~2.11]{zhou}).

\begin{corollary}
The following statements are equivalent:
\begin{enumerate}
    \item \(R\) is a generalized $t$-fine ring;
    \item For every \(R\)-bimodule \(M\), the trivial extension \(T(R,M)\) is a generalized $t$-fine ring;
    \item For every integer \(n \ge 1\), the ring \(R[x;\sigma]/\langle x^n\rangle\) is a generalized $t$-fine ring.
\end{enumerate}
\end{corollary}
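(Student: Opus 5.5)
The plan is to reduce everything to Lemma~\ref{factor}, applied to a nil ideal with a quotient isomorphic to $R$ in each of the two extensions.

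For the trivial extension $T(R,M)=R\oplus M$, with multiplication $(r,m)(r',m')=(rr',rm'+mr')$:
\begin{itemize}
\item The set $0\oplus M$ is an ideal whose square is zero, so it is a nil ideal.
\item The quotient $T(R,M)/(0\oplus M)$ is isomorphic to $R$.
\item Hence (1)$\Rightarrow$(2) follows from Lemma~\ref{factor}(ii).
\item For (2)$\Rightarrow$(1), take $M=0$ (or any bimodule, e.g.\ $M=R$). Then $R\cong T(R,M)/(0\oplus M)$, and Lemma~\ref{factor}(i) gives that $R$ is generalized $t$-fine.
\end{itemize}

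For the skew polynomial quotient $S_n=R[x;\sigma]/\langle x^n\rangle$, where $\sigma$ is an endomorphism of $R$ so that $xr=\sigma(r)x$:
\begin{itemize}
\item Let $I$ be the ideal of $S_n$ generated by $\bar x$; its elements are the classes of polynomials with zero constant term.
\item A product of $n$ elements of $I$ lies in $x^nR[x;\sigma]$, since moving $x$ past coefficients only applies $\sigma$ and never lowers the $x$-degree. So $I^n=0$ and $I$ is nil.
\item Sending a class to its constant term gives a surjective ring homomorphism $S_n\to R$ with kernel $I$. It is multiplicative because the constant term of a product is the product of the constant terms.
\item Then (1)$\Rightarrow$(3) follows from Lemma~\ref{factor}(ii), and (3)$\Rightarrow$(1) follows from Lemma~\ref{factor}(i) applied with $n=1$ (where $S_1\cong R$) or with any $n$.
\end{itemize}

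The only step needing care is checking that $I$ is a genuine two-sided nilpotent ideal in the skew setting. This holds because $\bar x\,\bar r=\overline{\sigma(r)}\,\bar x$, so $S_n\bar x S_n=\bar x S_n$ consists exactly of the classes with zero constant term. Once that is in place, the result is a direct corollary of Lemma~\ref{factor}.
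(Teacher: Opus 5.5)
Your proposal is correct and is essentially the paper's argument, which simply applies Lemma~\ref{factor} to the square-zero ideal $0\oplus M$ of $T(R,M)$ and to the nilpotent ideal of classes with zero constant term in $R[x;\sigma]/\langle x^n\rangle$, each with quotient $R$. One harmless slip: when $\sigma$ is not surjective, the ideal generated by $\bar x$ is $S_n\bar x$ (and $\langle x^n\rangle=R[x;\sigma]x^n$), not $\bar x S_n$; nothing depends on this, because you also identify $I$ as the kernel of the constant-term homomorphism.
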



\begin{corollary}\label{Jacobson factor}
If $R$ is a generalized $t$-fine ring, then $R/{\rm J}(R)$ is a $t$-fine ring. Conversely, if ${\rm J}(R)$ is nil and $R/{\rm J}(R)$ is a $t$-fine ring, then $R$ is a generalized $t$-fine ring.
\end{corollary}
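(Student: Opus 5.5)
Both directions come from Lemma~\ref{factor} applied to the ideal $I={\rm J}(R)$, combined with the fact that ${\rm J}(R/{\rm J}(R))=0$.

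For the first assertion, suppose $R$ is generalized $t$-fine. By Lemma~\ref{factor}(i) with $I={\rm J}(R)$, the quotient $\bar R:=R/{\rm J}(R)$ is generalized $t$-fine. Since ${\rm J}(\bar R)=0$, part (1) of the first lemma of this subsection shows that $\bar R$ is $t$-fine. Equivalently, the defining identity $\bar R\setminus{\rm J}(\bar R)=\mathcal T(\bar R)+\Nil(\bar R)$ becomes $\bar R\setminus\{0\}=\mathcal T(\bar R)+\Nil(\bar R)$.

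For the converse, assume ${\rm J}(R)$ is nil and $\bar R$ is $t$-fine. Part (1) of the same lemma gives that $\bar R$ is generalized $t$-fine. Now apply Lemma~\ref{factor}(ii) with the nil ideal $I={\rm J}(R)$ to conclude that $R$ is generalized $t$-fine.

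No step is a real obstacle. The only point to note is that Lemma~\ref{factor}(ii) uses the hypothesis $I\subseteq{\rm J}(R)$ to pass non-radical elements of $R$ to non-radical elements of $R/I$. This holds trivially here, since $I={\rm J}(R)$, so the lemma applies verbatim.
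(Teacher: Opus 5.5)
Your proof is correct and follows the paper's own route. The paper derives the corollary directly from Lemma~\ref{factor} with $I={\rm J}(R)$, using ${\rm J}(R/{\rm J}(R))=0$ and part (1) of the first lemma to identify $t$-fine rings with generalized $t$-fine rings whose Jacobson radical is zero.
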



\begin{remark}\label{remark 2.11}
Let $F$ be a finite field. Consulting with Corollary~\ref{center}, $F[[x]]$ (or $F[x]$) is {\it not} a generalized $t$-fine ring, whereas $F$ is. Now, the isomorphism $F[[x]]/\langle x\rangle\cong F$ unambiguously shows that the assumption that ${\rm J}(R)$ is nil is essential in Corollary~\ref{Jacobson factor}.
\end{remark}
Recall that a {\it PI ring} is a ring that satisfies a polynomial identity. Combining Corollary~\ref{Jacobson factor} with \cite[Proposition~10]{semi-nil}, we obtain the following result.

\begin{corollary}
Let $R$ be a PI ring. If $R$ is generalized $t$-fine, then $R/{\rm J}(R)\cong \mathbb{M}_n(F)$, where $F$ is a locally finite field and $n$ is a positive integer. Conversely, if ${\rm J}(R)$ is nil and $R/{\rm J}(R)\cong \mathbb{M}_n(F)$, where $F$ is a locally finite field, then $R$ is a generalized $t$-fine ring.
\end{corollary}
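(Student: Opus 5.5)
The plan is to reduce both directions to the semiprimitive quotient via Corollary~\ref{Jacobson factor}, and then invoke the PI classification of $t$-fine rings in \cite[Proposition~10]{semi-nil}. That result says a PI ring is $t$-fine exactly when it is isomorphic to $\mathbb{M}_n(F)$ for a locally finite field $F$.

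\emph{Necessity.} Suppose $R$ is PI and generalized $t$-fine. First, Corollary~\ref{Jacobson factor} shows that $\bar R:=R/{\rm J}(R)$ is $t$-fine. Second, $\bar R$ is again PI, since polynomial identities pass to homomorphic images. Applying \cite[Proposition~10]{semi-nil} to $\bar R$ then gives $\bar R\cong \mathbb{M}_n(F)$ with $F$ a locally finite field.

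\emph{Sufficiency.} Suppose ${\rm J}(R)$ is nil and $R/{\rm J}(R)\cong\mathbb{M}_n(F)$ with $F$ locally finite. The key point is that $\mathbb{M}_n(F)$ is $t$-fine. This is the converse direction of \cite[Proposition~10]{semi-nil}. It can also be seen directly: every matrix over $F$ lies in $\mathbb{M}_n(F_0)$ for some finite subfield $F_0$; the matrix ring over a finite field is fine by \cite{cl}; and its units are torsion because the unit group is finite. Once $R/{\rm J}(R)$ is $t$-fine and ${\rm J}(R)$ is nil, the second half of Corollary~\ref{Jacobson factor} yields that $R$ is generalized $t$-fine. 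Notably, no PI hypothesis on $R$ is needed for this direction.

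The only real content is the cited classification, so the main step to check is that \cite[Proposition~10]{semi-nil} is an equivalence and applies to the quotient $\bar R$. If only one direction of that proposition is available, I would fall back on the direct finite-subfield argument above for the converse.
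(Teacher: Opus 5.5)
Your proposal is correct and follows the same route as the paper, which obtains this corollary by combining Corollary~\ref{Jacobson factor} with \cite[Proposition~10]{semi-nil}, exactly as you do, including the observation that $R/{\rm J}(R)$ inherits the PI property. Your direct finite-subfield argument that $\mathbb{M}_n(F)$ is $t$-fine is a valid self-contained backup for the converse.
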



A ring $R$ is called UU if $\mathcal{U}(R)=1+\Nil(R)$, and WUU if $\mathcal{U}(R)=\pm 1 + \Nil(R)$ (see \cite{unipotent} and \cite{dan1}). Likewise, $R$ is called  JU if $\mathcal{U}(R)=1+{\rm J}(R)$, and WJU if $\mathcal{U}(R)=\pm 1 +{\rm J}(R)$ (see \cite{dan1} and \cite{dan2}).

It is worth mentioning that \cite{cl} proved that a fine UU ring is isomorphic to $\mathbb{F}_2$. Moreover, \cite{zhou} showed that a ring $R$ is generalized fine and UU if and only if $R$ is generalized fine and JU, equivalently, $R/{\rm J}(R)\cong \mathbb{F}_2$.


In this aspect, the following two corollaries have straightforward proofs, which we omit voluntarily.

\begin{corollary}
If $R$ is a ring which is both generalized $t$-fine and UU, then $R/{\rm J}(R)\cong \mathbb{F}_2$. The converse holds whenever ${\rm J}(R)$ is nil.
\end{corollary}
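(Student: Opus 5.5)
The forward direction passes to $\bar R:=R/{\rm J}(R)$. By Corollary~\ref{Jacobson factor}, $\bar R$ is $t$-fine. Since $R$ is UU, we have $\mathcal U(R)=1+\Nil(R)$. Units lift modulo ${\rm J}(R)$, in the sense that $\mathcal U(\bar R)$ is the image of $\mathcal U(R)$. Hence every unit of $\bar R$ has the form $\bar 1+\bar n$ with $\bar n$ nilpotent, so $\bar R$ is UU as well. We then invoke the result of \cite{cl} quoted just before the statement: a fine UU ring is isomorphic to $\mathbb F_2$. A $t$-fine ring is fine, because $\mathcal T\subseteq\mathcal U$, so $\bar R\cong\mathbb F_2$. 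If one prefers to avoid \cite{cl}, the following direct argument also works. $\bar R$ is $t$-fine and ${\rm J}(\bar R)=0$. Every nonzero $\bar a$ can be written $\bar a=\bar u+\bar q$ with $\bar u=1+\bar m$ unipotent. Thus every nonzero element is $1$ plus a sum of two nilpotents. From this one forces $\bar R$ to be a division ring with trivial unit group, i.e.\ $\mathbb F_2$. I would simply cite \cite{cl}.

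For the converse, assume ${\rm J}(R)$ is nil and $R/{\rm J}(R)\cong\mathbb F_2$. Then $\mathbb F_2$ is trivially $t$-fine, since its only nonzero element $1$ is a torsion unit plus $0$. So $R$ is generalized $t$-fine by the converse part of Corollary~\ref{Jacobson factor}. For UU, let $u\in\mathcal U(R)$. Its image is the unique unit of $\mathbb F_2$, namely $\bar 1$, so $u\in 1+{\rm J}(R)\subseteq 1+\Nil(R)$ because ${\rm J}(R)$ is nil. The reverse inclusion $1+\Nil(R)\subseteq\mathcal U(R)$ always holds, so $R$ is UU.

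The only step that is more than bookkeeping is the descent of the UU property to $R/{\rm J}(R)$ together with the appeal to the fine-UU classification. That is where I would be careful: units of $R/{\rm J}(R)$ lift to units of $R$, and images of nilpotents are nilpotent, which is all that is needed.
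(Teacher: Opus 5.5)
Your proof is correct and is essentially the argument the paper has in mind; the paper omits it as straightforward. You pass to $R/{\rm J}(R)$, which is $t$-fine by Corollary~\ref{Jacobson factor} and inherits UU because units lift modulo ${\rm J}(R)$, then apply the fine-UU classification of \cite{cl} (alternatively, generalized $t$-fine implies generalized fine, and the result of \cite{zhou} quoted before the statement applies directly); the converse is the converse half of Corollary~\ref{Jacobson factor} together with $\mathcal U(R)=1+{\rm J}(R)\subseteq 1+\Nil(R)$.

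Your optional ``direct argument'' only asserts, without proof, that $\bar R$ is forced to be a division ring, but since you rely on \cite{cl} this does not affect the proof.
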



\begin{corollary}
If $R$ is a ring which is both generalized $t$-fine and WJU, then either $R/{\rm J}(R)\cong \mathbb{F}_2$ or $ R/{\rm J}(R)\cong \mathbb{F}_3$. The converse holds whenever ${\rm J}(R)$ is nil.
\end{corollary}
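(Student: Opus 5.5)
The plan is to split into the two directions and reduce everything to statements about $\bar R:=R/{\rm J}(R)$. I will use Corollary~\ref{Jacobson factor} to pass to the factor ring, and the standard facts about units modulo the Jacobson radical.

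\emph{Forward direction.} Suppose $R$ is generalized $t$-fine and WJU. By Corollary~\ref{Jacobson factor}, $\bar R$ is $t$-fine; in particular it is fine, hence simple. Since $R$ is WJU, every unit of $R$ has the form $\pm1+j$ with $j\in{\rm J}(R)$. Units of $\bar R$ lift to units of $R$, because $u+{\rm J}(R)$ invertible implies $u$ invertible. Therefore $\mathcal U(\bar R)=\{\bar1,-\bar1\}$.

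Now take any nonzero $\bar a\in\bar R$. Since $\bar R$ is $t$-fine, write $\bar a=\bar u+\bar n$ with $\bar u=\pm\bar1$ and $\bar n$ nilpotent. Then $\bar a=\pm(\bar1\pm\bar n)$ is a unit, so $\bar R$ is a division ring. Its unit group $\{\pm\bar1\}$ has at most two elements, so $\bar R$ has at most three elements. Hence $\bar R\cong\mathbb F_2$ or $\bar R\cong\mathbb F_3$.

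\emph{Converse direction.} Assume ${\rm J}(R)$ is nil and $\bar R\cong\mathbb F_2$ or $\mathbb F_3$. Both fields are trivially $t$-fine, so Corollary~\ref{Jacobson factor} gives that $R$ is generalized $t$-fine. For the WJU property, let $u\in\mathcal U(R)$. Then $\bar u\in\mathcal U(\bar R)=\{\pm\bar1\}$, so $u\in\pm1+{\rm J}(R)$. Conversely, every element of $\pm1+{\rm J}(R)$ is a unit. Hence $\mathcal U(R)=\pm1+{\rm J}(R)$.

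The main obstacle is the forward step showing $\bar R$ is a division ring. This needs the lifting of units from $R/{\rm J}(R)$ to $R$, and the observation that $\pm\bar1$ plus a nilpotent is a unit. The rest is routine.
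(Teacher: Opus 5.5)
Your proof is correct and follows the route the paper has in mind; the paper omits this proof as straightforward. You pass to $R/{\rm J}(R)$ via Corollary~\ref{Jacobson factor}, use WJU together with lifting of units modulo ${\rm J}(R)$ to get $\mathcal{U}(R/{\rm J}(R))=\{\pm\bar 1\}$, and conclude that the $t$-fine quotient is a division ring with at most three elements; for the converse you combine Corollary~\ref{Jacobson factor} with $\mathcal{U}(R)=\pm 1+{\rm J}(R)$, read off from the unit groups of $\mathbb{F}_2$ and $\mathbb{F}_3$.
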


\medskip

\subsection{Matrix Rings}
In this subsection, we explore the generalized $t$-fine property for matrix ring extensions. In \cite[Theorem~2.5]{zhou}, it was shown that if $R$ is a generalized fine ring, then so is $\mathbb M_n(R)$; likewise, in \cite[Theorem~6]{semi-nil}, we proved that $t$-fineness also passes to $\mathbb M_n(R)$. However, we are currently unable to establish the general result that being generalized $t$-fine is inherited by full matrix rings. Nevertheless, in what follows we prove the validity of some interesting partial cases, which are formulated below.

Recall that a ring $R$ is weakly 2-primal if $\Nil(R)$ coincides with the
Levitzki radical of $R$. For example, every commutative ring and every reduced ring are weakly 2-primal.

\begin{theorem}\label{matrix1}
Let $R$ be a weakly $2$-primal generalized $t$-fine ring. Then, for every positive integer $n$, the matrix ring $\mathbb{M}_n(R)$ is also generalized $t$-fine.
\end{theorem}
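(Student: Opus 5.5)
The plan is to reduce to the semiprimitive factor and then lift back along a nil ideal. Put $L=\Nil(R)$. Since $R$ is weakly $2$-primal, $L$ is the Levitzki radical of $R$, so it is a locally nilpotent ideal. We would prove three things.

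\textbf{(a)} The ideal $\mathbb{M}_n(L)$ of $\mathbb{M}_n(R)$ is nil, in fact locally nilpotent. Indeed, a matrix over $L$ has only finitely many entries, and these generate a nilpotent subring $S$ of $L$, say $S^m=0$. Then the matrix lies in $\mathbb{M}_n(S)$, and $\mathbb{M}_n(S)^m=0$.

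\textbf{(b)} The factor $\bar R:=R/L$ is generalized $t$-fine by Lemma~\ref{factor}(i). It is also NR, because $R/L$ is reduced: the Levitzki radical of $R/L$ is zero, and $\Nil(R/L)$ lifts into $\Nil(R)=L$ since $L$ is nil. A reduced ring is trivially NR. So Theorem~\ref{NI local} applies, and $\bar R$ is local, ${\rm J}(\bar R)$ is nil, and $\mathcal U(\bar R)$ is torsion. Reducedness then forces ${\rm J}(\bar R)=0$. Hence $\bar R$ is a division ring $D$ whose multiplicative group is torsion. By Jacobson's theorem, $D$ is commutative, and it is a field $F$ algebraic over its finite prime field. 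Thus $F$ is a locally finite field.

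\textbf{(c)} The ring $\mathbb{M}_n(F)$ is $t$-fine. This follows either from \cite[Theorem~6]{semi-nil}, since $F$ is $t$-fine, or from \cite[Proposition~10]{semi-nil}.

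Now $\mathbb{M}_n(R)/\mathbb{M}_n(L)\cong \mathbb{M}_n(F)$ is generalized $t$-fine by (c), and $\mathbb{M}_n(L)$ is a nil ideal by (a). Lemma~\ref{factor}(ii) then shows that $\mathbb{M}_n(R)$ is generalized $t$-fine.

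The main obstacle is step (b): making sure $R/L$ is really reduced and that Theorem~\ref{NI local} yields a field and not merely a local ring. Reducedness needs care. If $\bar x^k=0$, then $x^k\in L$ is nilpotent, so $x$ is nilpotent and $x\in L$. This uses only that $L=\Nil(R)$ is an ideal, which holds because $L$ is the Levitzki radical. Commutativity of a division ring with torsion unit group is the classical Jacobson theorem, and finiteness of the prime field comes from Lemma~\ref{char}. If one prefers to avoid Jacobson's theorem, one can instead invoke \cite[Theorem~6]{semi-nil} directly on the $t$-fine ring $\bar R$, since ${\rm J}(\bar R)=0$ makes $\bar R$ $t$-fine by part (1) of the first lemma.
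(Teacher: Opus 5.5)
Your proof is correct and follows essentially the same route as the paper: pass to the factor of $R$ by $\Nil(R)$ (the paper quotients by ${\rm J}(R)$, which coincides with $\Nil(R)$ here), apply \cite[Theorem~6]{semi-nil} to the matrix ring over this $t$-fine factor, and lift back along the locally nilpotent, hence nil, ideal $\mathbb{M}_n(\Nil(R))$ using Lemma~\ref{factor}(ii). Your extra identification of $R/\Nil(R)$ as a locally finite field, via Theorem~\ref{NI local} and Jacobson's theorem, is valid but not needed, as you yourself note at the end.
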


\begin{proof}
In accordance with Corollary~\ref{Jacobson factor}, the quotient ring $R/{\rm J}(R)$ is a $t$-fine ring. Hence, apply \cite[Theorem~6]{semi-nil} to get that
\[
\mathbb{M}_n(R/{\rm J}(R))
\cong
\mathbb{M}_n(R)/\mathbb{M}_n({\rm J}(R))
\]
is also a $t$-fine ring.

On the other hand, ${\rm J}(R)$ is nil thanks to Theorem~\ref{NI local}. Furthermore, since $R$ is weakly $2$-primal, we have ${\rm J}(R)=\Nil(R)$, which coincides with the locally nilpotent radical (that is, the Levitzki radical) of $R$. Therefore, $\mathbb{M}_n({\rm J}(R))$ is a nil-ideal of $\mathbb{M}_n(R)$. It now follows from Corollary~\ref{Jacobson factor} that $\mathbb{M}_n(R)$ is a generalized $t$-fine ring, as desired.
\end{proof}


Combining Corollary~\ref{center}, Theorem~\ref{NI local} and Theorem~\ref{matrix1}, we immediately obtain the following consequence.

\begin{corollary}
Let $R$ be a commutative ring, and let $n\geq 1$ be an integer. Then, the following statements are equivalent:
\begin{enumerate}
\item $\mathbb{M}_n(R)$ is a generalized $t$-fine ring.
\item $R$ is a generalized $t$-fine ring.
\item $R$ is a local ring and $\mathcal{U}(R)$ is torsion.
\end{enumerate}
\end{corollary}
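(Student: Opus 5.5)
We prove the cycle of implications (2)$\Leftrightarrow$(3), (2)$\Rightarrow$(1), and (1)$\Rightarrow$(3).

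For (2)$\Leftrightarrow$(3), note that a commutative ring has $\Nil(R)$ an ideal. In particular $R$ is NR, so Theorem~\ref{NI local} applies directly and gives the equivalence.

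For (2)$\Rightarrow$(1), every commutative ring is weakly $2$-primal: its nilradical is an ideal consisting of nilpotents, and since it is locally nilpotent it equals the Levitzki radical. Theorem~\ref{matrix1} then yields that $\mathbb{M}_n(R)$ is generalized $t$-fine.

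For (1)$\Rightarrow$(3), we use the center. The center of $\mathbb{M}_n(R)$ is the set of scalar matrices $\{rI_n : r\in R\}$, which is isomorphic to $R$ since $R$ is commutative. Applying Corollary~\ref{center} to the generalized $t$-fine ring $\mathbb{M}_n(R)$, its center is local with torsion unit group. Transporting this along the isomorphism, $R$ is local and $\mathcal{U}(R)$ is torsion.

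The only point needing care is identifying $C(\mathbb{M}_n(R))$ with $R$. This is standard: a matrix commuting with all matrix units $e_{ij}$ must be scalar, with its diagonal entry central in $R$. No step is a genuine obstacle; the content lies in the cited results.
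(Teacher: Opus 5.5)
Your proposal is correct and follows the paper's own argument. The paper likewise combines three results: Corollary~\ref{center}, using $C(\mathbb{M}_n(R))\cong R$, for (1)$\Rightarrow$(3); Theorem~\ref{NI local}, since commutative rings are NR, for (2)$\Leftrightarrow$(3); and Theorem~\ref{matrix1}, since commutative rings are weakly $2$-primal, for (2)$\Rightarrow$(1).
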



\begin{example}
For any natural number $n$, the ring $\mathbb{M}_n(\Q)$ is a fine ring (see \cite{cl}) that is {\it not} a generalized $t$-fine ring in conjunction with Lemma~\ref{char}, while the ring $\mathbb{M}_n(\Z_4)$ is a generalized $t$-fine ring that is {\it not} a fine ring (as it is not simple).
\end{example}


For an integer $n>1$ and $1\le i\neq j\le n$, let $E_{ij}$ denote the $n\times n$ matrix whose $(i,j)$-entry is $1$ and whose remaining entries are $0$. For $\alpha\in R$ and $1\le i\neq j\le n$, define
\[
T_{ij}(\alpha):=\mathrm I_n+\alpha E_{ij}.
\]

We use the following facts frequently in the proof of our next result.

\begin{remark}\label{r1}
Let $R$ be a ring.
\begin{enumerate}
\item If $x\in R$ is a generalized $t$-fine element, then $uxu^{-1}$ is also a generalized $t$-fine element for every $u\in \mathcal{U}(R)$.

\item Suppose that $x,y\in R$ satisfy $x-y\in \mathcal{U}(R)$. Then, for any $a\in R$ and $b\not\in {\rm J}(R)$, at least one of the two elements $a+xb$ and $a+yb$ does not belong to ${\rm J}(R)$. Indeed, if both $a+xb$ and $a+yb$ belong to ${\rm J}(R)$, then
\[
(x-y)b
=(a+xb)-(a+yb)\in {\rm J}(R).
\]
Since $x-y$ is invertible, it follows that $ b\in {\rm J}(R)$, a contradiction.
\end{enumerate}
\end{remark}

We are now prepared to prove the following result.

\begin{theorem}\label{matrix2}
Let $F$ be a field with at least three elements, and let $R$ be an $F$-algebra. If $R$ is generalized $t$-fine, then $\mathbb{M}_n(R)$ is also generalized $t$-fine for every positive integer $n$.
\end{theorem}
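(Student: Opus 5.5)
Since ${\rm J}(\mathbb{M}_n(R))=\mathbb{M}_n({\rm J}(R))$, I will argue by induction on $n$, following Zhou's proof of \cite[Theorem~2.5]{zhou} and the proof of \cite[Theorem~6]{semi-nil}. At every step the unit produced must be torsion, not just a unit. The case $n=1$ is the hypothesis. For the inductive step I write a matrix $A\in \mathbb{M}_n(R)\setminus \mathbb{M}_n({\rm J}(R))$ in block form
\[
A=\begin{pmatrix} a & \beta\\ \gamma & B\end{pmatrix},
\]
with $a\in R$ and $B\in \mathbb{M}_{n-1}(R)$. By Remark~\ref{r1}(1) it suffices to handle a conjugate of $A$ by an invertible matrix such as a permutation matrix or a product of $T_{ij}(\alpha)$'s. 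Conjugating by permutation matrices, I may assume $a\notin {\rm J}(R)$. Otherwise some entry lies outside ${\rm J}(R)$ and only off-diagonal entries do; in that case I conjugate by $T_{ij}(\alpha)$ with $\alpha\in F$ to move a non-radical quantity onto the diagonal.

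This is where $|F|\ge 3$ is used. Choose scalars $\lambda\ne\mu$ in $F$; then $\lambda-\mu\in F^\times\subseteq \mathcal U(R)$. By Remark~\ref{r1}(2), for a suitable choice among these scalars the new $(1,1)$-entry $a+\lambda b$ (with $b\notin{\rm J}(R)$ an off-diagonal entry) lies outside ${\rm J}(R)$. At the same time I want the lower-right block to be either outside $\mathbb{M}_{n-1}({\rm J}(R))$ or inside it, as needed below. Having three scalars available (for instance $0$, $1$ and some $\lambda\notin\{0,1\}$) lets me avoid two bad values simultaneously, since each bad condition is violated by at most one scalar by Remark~\ref{r1}(2).

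Once $a\notin{\rm J}(R)$, write $a=u+q$ with $u\in\mathcal T(R)$ and $q\in\Nil(R)$. Conjugating with suitable block elementary matrices $\begin{pmatrix}1&0\\ \ast & I\end{pmatrix}$ and $\begin{pmatrix}1&\ast\\ 0& I\end{pmatrix}$, I aim to reduce $A$ to a block triangular shape
\[
\begin{pmatrix} u+q' & \ast\\ 0 & B'\end{pmatrix}
\]
with the corner still a torsion unit plus a nilpotent. I will then treat $B'$ in one of two ways. If $B'\notin \mathbb{M}_{n-1}({\rm J}(R))$, the induction hypothesis gives $B'=V+N$ with $V$ a torsion unit and $N$ nilpotent. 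If $B'$ lies in the radical, I instead choose the scalars so that $B'-I$ (or $B'+\lambda I$, a non-radical matrix) decomposes, and I absorb the shift into the corner.

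The decomposition is then
\[
\begin{pmatrix} u & \ast\\ 0 & V\end{pmatrix}+\begin{pmatrix} q & 0\\ 0 & N\end{pmatrix}.
\]
The first matrix is block upper triangular with torsion diagonal blocks, so it is a unit. It is also torsion: if $u^m=1$ and $V^m=1$, its $m$-th power is unipotent, and I apply Lemma~\ref{unipotent}. This lemma needs the ambient ring $\mathbb{M}_n(R)$ to have positive characteristic, which holds by Lemma~\ref{char} applied to $R$; alternatively, a unipotent block-triangular matrix with identity diagonal has order a power of $\mathrm{char}$.

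The main obstacle is the nilpotent part. A sum of the block-diagonal nilpotent with off-diagonal pieces need not be nilpotent, so every off-diagonal remainder must be put into the unit part, keeping the nilpotent part strictly block-diagonal (or block-triangular). Making the triangularizing conjugation actually work is the step I expect to require the $|F|\ge3$ case analysis. It has to be done without assuming $u$ commutes with anything; the triangularizing conjugations will use $u^{-1}$, with $u$ a unit.
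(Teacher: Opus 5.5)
Your proposal has a genuine gap, and it is the step you yourself flag as the main obstacle. In general no conjugation brings $A$ to the form $\begin{pmatrix} c & \ast\\ 0 & B'\end{pmatrix}$ with $c\notin{\rm J}(R)$, and this has nothing to do with $|F|\ge 3$. Already for $R=F=\mathbb{F}_3$ (which is $t$-fine), such a conjugate means that $A$ has an eigenvector with a nonzero eigenvalue in $F$. Take $A=\mathrm{diag}(C,0)\in\mathbb{M}_3(\mathbb{F}_3)$ with $C=\begin{pmatrix}0&-1\\1&0\end{pmatrix}$. Its characteristic polynomial $x(x^2+1)$ has $0$ as its only root in $\mathbb{F}_3$, so no invertible matrix, elementary or not, conjugates it into your shape.

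Your second branch also fails. If $B'\in\mathbb{M}_{n-1}({\rm J}(R))$, then $B'=V+N$ with $V$ a unit and $N$ nilpotent is impossible, since $N=B'-V$ would be a unit. So the block-diagonal nilpotent part you aim for cannot exist. Decomposing $B'+\lambda I$ instead leaves $V-\lambda I$ in the lower-right block; that is not a torsion unit and cannot be moved into the $(1,1)$ corner. This branch cannot be avoided: every triangular conjugate of $E_{11}\in\mathbb{M}_2(\mathbb{F}_3)$ with nonzero $(1,1)$-entry has $B'=0$.

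The missing idea is that no triangularization is needed, because your premise that all off-diagonal material must go into the unit part is false. Write $A=\begin{pmatrix}B&C\\ D&E\end{pmatrix}$ and, by induction, $B=T_B+N_B$ and $E=T_E+N_E$. Then take
\[
A=\begin{pmatrix}T_B&0\\ D&T_E\end{pmatrix}+\begin{pmatrix}N_B&C\\ 0&N_E\end{pmatrix}.
\]
The second summand is block upper triangular with nilpotent diagonal blocks, hence nilpotent. The first is block lower triangular with torsion diagonal blocks, so some power of it is unipotent, and it is torsion by Lemma~\ref{char} and \cite[Theorem~3.2]{unipotent}.

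This only needs both diagonal blocks to lie outside the radical. That holds when two diagonal entries $a_{ii},a_{jj}$ with $i<j$ are outside ${\rm J}(R)$ and you cut after row $i$. The paper reaches this situation by conjugating with $T_{ij}(x)$, $x\in F$:
\begin{itemize}
\item If all diagonal entries are radical and $a_{ij}\notin{\rm J}(R)$, conjugating by $T_{ji}(1)$ puts $a_{ii}-a_{ij}$ and $a_{jj}+a_{ij}$ on the diagonal.
\item If only $a_{11}$ is non-radical, first arrange $a_{21}\notin{\rm J}(R)$, using $T_{21}(1)$ when $a_{12},a_{21}\in{\rm J}(R)$ and the symmetric argument when $a_{12}\notin{\rm J}(R)$. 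Then conjugate by $T_{12}(x)$ with $x\in F^{\times}$ chosen so that $a_{11}+xa_{21}\notin{\rm J}(R)$. Such an $x$ exists by Remark~\ref{r1}(2), since $F$ has two distinct nonzero scalars. The other new diagonal entry, $a_{22}-a_{21}x$, is then automatically outside ${\rm J}(R)$.
\end{itemize}
The choice of $x$ in the second case is the only place where $|F|\ge 3$ is used. Over $\mathbb{F}_2$ it genuinely fails: $E_{11}$ has trace $1$, so it is not similar to any matrix with two nonzero diagonal entries.
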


\begin{proof}
We establish the theorem by induction on $n$.
The case $n=1$ is trivial, so assume that $n>1$, and suppose that every matrix $A\in \mathbb{M}_k(R)$ admits a sum
\[
A=T_A+N_A,
\]
where $T_A\in\mathbb{M}_k(R)$ is torsion and $N_A\in\mathbb{M}_k(R)$ is nilpotent for each $1\le k<n$.

Set
$
A:=(a_{ij})\in\mathbb{M}_n(R)\setminus {\rm J}(\mathbb{M}_n(R)).
$
We shall show that
$
A=T_A+N_A,$
where $T_A$ is torsion and $N_A$ is nilpotent. We proceed by distinguishing several cases as follows.

\medskip

\noindent \textit{Case 1: There exist $1\le i<j\le n$ such that $a_{ii},a_{jj}\not\in {\rm J}(R)$}.

Write
\[
A=
\begin{pmatrix}
B&C\\
D&E
\end{pmatrix},
\]
where $B\in\mathbb{M}_i(R)$, $E\in\mathbb{M}_{n-i}(R)$, $C\in\mathbb{M}_{i\times(n-i)}(R)$ and $D\in\mathbb{M}_{(n-i)\times i}(R)$.
By the induction hypothesis,
\[
B=T_B+N_B,\quad\text{and}\quad
E=T_E+N_E,
\]
where $T_B,T_E$ are torsion matrices and $N_B,N_E$ are nilpotent matrices.
Therefore,
\[
A=
\begin{pmatrix}
T_B&\bf0\\
D&T_E
\end{pmatrix}
+
\begin{pmatrix}
N_B&C\\
\bf0&N_E
\end{pmatrix}.
\]
Set
\[
T_A:=
\begin{pmatrix}
T_B&\bf0\\
D&T_E
\end{pmatrix},
\quad\text{and}\quad
N_A=
\begin{pmatrix}
N_B&C\\
\bf0&N_E
\end{pmatrix}.
\]
Evidently $N_A$ is nilpotent. Since $T_B$ and $T_E$ are torsion, there exists a natural number $m$ such that
\[
T_A^m
=
\begin{pmatrix}
\mathrm I_i&\bf0\\
D'&\mathrm I_{n-i}
\end{pmatrix}.
\]
Hence, $T_A^m$ is unipotent. By virtue of Lemma~\ref{char} and \cite[Theorem~3.2]{unipotent}, it follows that $T_A$ is torsion, as desired.
\medskip

\noindent \textit{Case 2: There exists a unique index $i\in\{1,\ldots,n\}$ such that $a_{ii}\notin {\rm J}(R)$, whereas $a_{jj}\in {\rm J}(R)$ for every $j\neq i$}.

We may conjugate $A$ by an invertible permutation matrix so that $a_{11}$ and $a_{ii}$ are interchanged. Hence, in view of Remark~\ref{r1}(i), we may assume without loss of generality that
$
a_{11}\not\in {\rm J}(R)$ and $a_{ii}\in {\rm J}(R)$ for all $2\le i\le n$.

We further divide this case into three subcases.

\smallskip

\noindent \textit{Subcase 2.1: $a_{21}\not\in {\rm J}(R)$}. In view of Remark~\ref{r1}(ii), and since $F$ has at least three elements, we may choose $0\neq x\in F$ such that
\[
a_{11}+xa_{21}\not\in {\rm J}(R).
\]
Then,
\[
T_{12}(x)AT_{12}(x)^{-1}
=
\begin{pmatrix}
\begin{matrix}
a_{11}+xa_{21}&*\\
a_{21}&a_{22}-a_{21}x
\end{matrix}&\vline &*\\
\hline *&\vline &*
\end{pmatrix}.
\]
But both diagonal entries $a_{11}+xa_{21}$ and $a_{22}-a_{21}x$
lie outside ${\rm J}(R)$. Hence, by Case 1,
$
T_{12}(x)AT_{12}(x)^{-1}
$
is a generalized $t$-fine element. Therefore, so is $A$, in view of Remark~\ref{r1}(i).

\smallskip

\noindent \textit{Subcase 2.2: $a_{12}\not\in {\rm J}(R)$}. The proof is entirely analogous to that of Subcase~2.1 by considering $T_{21}(x)AT_{21}(x)^{-1}$,
where $0\neq x\in F$ satisfies
$
a_{11}-a_{12}x\not\in {\rm J}(R).$

\smallskip

\noindent \textit{Subcase 2.3: $a_{12},a_{21}\in {\rm J}(R)$}. Consider
\[
T_{21}(1)AT_{21}(1)^{-1}
=
\begin{pmatrix}
\begin{matrix}
a_{11}-a_{12}&a_{12}\\
a_{11}-a_{12}+a_{21}-a_{22}&a_{22}+a_{12}
\end{matrix}&\vline &*\\
\hline *&\vline&*
\end{pmatrix}.
\]
Its $(2,1)$-entry is $a_{11}-a_{12}+a_{21}-a_{22}$, which obviously does not belong to ${\rm J}(R)$. Thus, we are reduced to Subcase~2.1. This completes Case 2.

\medskip

\noindent \textit{Case 3: $a_{ii}\in {\rm J}(R)$ for every $1\le i\le n$}.

Since
$
A\not\in {\rm J}(\mathbb{M}_n(R))
=
\mathbb{M}_n({\rm J}(R)),
$
there exist indices $i\neq j$ such that
$
a_{ij}\not\in {\rm J}(R).$
Without loss of generality, assume that $i<j$. Consider
\[
T_{ji}(1)AT_{ji}(1)^{-1}.
\]
Its $(i,i)$- and $(j,j)$-entries are, respectively,
$
a_{ii}-a_{ij}$ and
$a_{jj}+a_{ij}
$. Evidently, both of them lie outside ${\rm J}(R)$. Hence, Remark~\ref{r1}(ii) applies, and we are thereby reduced to Case 1.

Consequently, every possible case leads to the conclusion that $A$ is a generalized $t$-fine element, as required, thus completing the proof.
\end{proof}


Although $\mathbb M_n(\mathbb F_2)$ is $t$-fine by \cite[Theorem~6]{semi-nil}, we are presently unaware of what happens to the generalized $t$-fine structure of $\mathbb M_n(R)$ when $R$ is an algebra over $\mathbb F_2$.

\medskip

\subsection{Endomorphism Ring of Abelian Groups}
It has been proved in \cite[Corollary~12]{semi-nil} that for any abelian group $G$ its endomorphism ring $\operatorname{End}(G)$ is a $t$-fine ring if and only if $G$ is a finite elementary abelian $p$-group for some prime $p$. In the following, we seek a proper generalization.

\begin{remark}\label{End factor}
Let $p$ be a prime number, and let
$$H\cong\prod_{i=1}^k\Z_{p^{r_i}},\quad\text{where}\quad 1\leq r_1\leq\cdots\leq r_k<\infty.$$
Then, bearing in mind \cite[Corollary~20.14]{End} and \cite[Lemma~4.8]{Li}, we deduce that
\[
\operatorname{End}(H)/{\rm J}(\operatorname{End}(H))\cong \prod_{s\geq0}\mathbb{M}_{n_s}(\mathbb F_p),
\]
where $n_s=|\{r_i : r_i > s\}| - |\{r_i : r_i > s + 1\}|$. In particular, if
$$H\cong\prod_{i=1}^t (\Z_{p^{r_i}})^{m_i},\quad\text{where}\quad r_1<\cdots<r_t,$$
then it is easily seen that $n_s=m_i$ if $s=r_i-1$, and $n_s=0$ otherwise. Therefore, in this case, we derive
\[
\operatorname{End}(H)/{\rm J}(\operatorname{End}(H))\cong \prod_{i=1}^t \mathbb{M}_{m_i}(\mathbb F_p).
\]
\end{remark}


We are now able to establish the following.

\begin{theorem}\label{endo}
Let \(G\) be a finite abelian group. Then, $\operatorname{End}(G)$ is a generalized $t$-fine ring if and only if $G\cong (\mathbb Z_{p^k})^n$ for some prime \(p\) and positive integers \(k,n\).
\end{theorem}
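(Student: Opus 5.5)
We prove the two directions separately, and in each the key step is to move to the quotient by the Jacobson radical using Corollary~\ref{Jacobson factor}. Since $G$ is finite, $\operatorname{End}(G)$ is a finite ring. Hence ${\rm J}(\operatorname{End}(G))$ is nilpotent, in particular nil, and every unit of $\operatorname{End}(G)$ is torsion. By Corollary~\ref{Jacobson factor}, $\operatorname{End}(G)$ is generalized $t$-fine if and only if $\operatorname{End}(G)/{\rm J}(\operatorname{End}(G))$ is $t$-fine. So the whole problem reduces to deciding when this semisimple quotient is $t$-fine.

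For sufficiency, take $G\cong(\mathbb Z_{p^k})^n$. This is the case $t=1$, $r_1=k$, $m_1=n$ of Remark~\ref{End factor}, so
$$\operatorname{End}(G)/{\rm J}(\operatorname{End}(G))\cong\mathbb M_n(\mathbb F_p).$$
This ring is $t$-fine by \cite[Theorem~6]{semi-nil}, since $\mathbb F_p$ is trivially $t$-fine. (Alternatively, it is $\operatorname{End}$ of a finite elementary abelian $p$-group, covered by \cite[Corollary~12]{semi-nil}.) Corollary~\ref{Jacobson factor} then shows that $\operatorname{End}(G)$ is generalized $t$-fine.

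For necessity, suppose $\operatorname{End}(G)$ is generalized $t$-fine. We first show that $G$ is a $p$-group. Write $G=\prod_p G_p$ as the product of its primary components. Then $\operatorname{End}(G)\cong\prod_p\operatorname{End}(G_p)$, because there are no nonzero homomorphisms between components for distinct primes. Its center contains the product of the centers of the factors, and in particular the idempotents $e_p$ projecting onto the $G_p$. By Corollary~\ref{center}, $C(\operatorname{End}(G))$ is local, so it has no nontrivial idempotents. Hence only one $G_p$ is nonzero, and $G$ is a $p$-group (the case $G=0$ is excluded, or regarded as trivial).

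It remains to show that only one exponent occurs. Write $G\cong\prod_{i=1}^t(\mathbb Z_{p^{r_i}})^{m_i}$ with $r_1<\cdots<r_t$. By Remark~\ref{End factor},
$$\operatorname{End}(G)/{\rm J}(\operatorname{End}(G))\cong\prod_{i=1}^t\mathbb M_{m_i}(\mathbb F_p),$$
and by Lemma~\ref{factor}(i) this quotient is generalized $t$-fine. Its radical is zero, so it is in fact $t$-fine. Its center is $\mathbb F_p^t$, which by Corollary~\ref{center} must be local, forcing $t=1$. We could argue directly instead: a $t$-fine ring is fine, and fine rings are simple. Either way $G\cong(\mathbb Z_{p^{r_1}})^{m_1}$, as claimed.

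The main obstacle is justifying that the central idempotents really obstruct the generalized $t$-fine property. This is handled cleanly by Corollary~\ref{center}, since a local ring has only the trivial idempotents $0$ and $1$. The remaining work is just the bookkeeping in Remark~\ref{End factor}.
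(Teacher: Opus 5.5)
Your proof is correct and follows the same route as the paper. You reduce to the quotient by ${\rm J}(\operatorname{End}(G))$ via Corollary~\ref{Jacobson factor}, split $G$ into primary components, and use Remark~\ref{End factor} to identify the quotient as $\prod_{i=1}^t\mathbb M_{m_i}(\mathbb F_p)$. The only real difference is how you rule out extra primes and extra exponents: you use Corollary~\ref{center}, since a local center has no nontrivial idempotents and the center of the quotient is $\mathbb F_p^t$. The paper instead uses that a $t$-fine ring is fine and hence simple, which you also mention. Your version has the small advantage of relying only on results proved in the paper.
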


\begin{proof}
First, suppose that \(G\cong (\mathbb Z_{p^k})^{n}\). Thus, $\operatorname{End}(G)\cong \mathbb{M}_n(\mathbb Z_{p^k})$,
and hence
\[
\operatorname{End}(G)/{\rm J}(\operatorname{End}(G))
\cong
\mathbb{M}_n(\mathbb Z_{p^k}/p\mathbb Z_{p^k})
\cong \mathbb{M}_n(\mathbb F_p),
\]
which is $t$-fine. Since ${\rm J}(\operatorname{End}(G))$ is nil, Corollary~\ref{Jacobson factor} informs us that $\operatorname{End}(G)$ is a generalized $t$-fine ring.

Conversely, suppose that $\operatorname{End}(G)$ is a generalized $t$-fine ring. Write the primary decomposition of \(G\) as
\[
G=\prod_{p\mid |G|}G_p,
\]
where \(G_p\) denotes the \(p\)-primary component of \(G\). Since $\operatorname{Hom}(G_p,G_q)=0$ for any two distinct primes $p$ and $q$,
we have
\[
\operatorname{End}(G)
\cong
\prod_{p\mid |G|}\operatorname{End}(G_p).
\]
Consequently,
\[
\operatorname{End}(G)/{\rm J}(\operatorname{End}(G))
\cong
\prod_{p\mid |G|}
\operatorname{End}(G_p)/{\rm J}(\operatorname{End}(G_p)).
\]
Since this quotient is $t$-fine, it is simple and, therefore, there can be only one prime divisor of \(|G|\). Hence, \(G\) is a finite abelian $p$-group for some prime number $p$.

Now, write
\[
G\cong
(\mathbb Z_{p^{r_1}})^{m_1}
\times\cdots\times
(\mathbb Z_{p^{r_t}})^{m_t},
\qquad
r_1<\cdots<r_t.
\]
Therefore, Remark~\ref{End factor} allows us to write that
\[
\operatorname{End}(G)/{\rm J}(\operatorname{End}(G))
\cong
\prod_{i=1}^t \mathbb{M}_{m_i}(\mathbb F_p).
\]
So, this quotient is simple exactly when \(t=1\). Finally,
\[
G\cong  (\mathbb Z_{p^k})^{n}
\]
for some \(k,n\geq1\), as formulated.
\end{proof}


\medskip

\subsection{Group Rings}
In what follows, we are concerned with the behaviour of groups rings in the class of generalized $t$-fine (resp., generalized fine) rings. To this purpose, recall that the map \( \varepsilon : RG \rightarrow R \), defined by
$$\varepsilon\left(\sum_{g \in G} a_g g\right) = \sum_{g \in G} a_g,$$
is a ring epimorphism, known as the \textit{augmentation map} of \( RG \). Its kernel, denoted by \( \Delta(RG) \), is called the \textit{augmentation ideal} of \( RG \).

\medskip

We are now ready to prove the following basic assertion.

\begin{theorem}\label{group ring1}
Let $R$ be a ring and let $G$ be a non-trivial group.
\begin{itemize}
\item[(1)] If $RG$ is a generalized $t$-fine (resp., generalized fine) ring, then $R$ is a generalized $t$-fine (resp., generalized fine) ring and $G$ is a $p$-group for some prime $p\in {\rm J}(R)$.
\item[(2)] If $R$ is a generalized $t$-fine (resp., generalized fine) ring and $G$ is a locally finite $p$-group for some prime $p\in \Nil(R)$, then $RG$ is a generalized $t$-fine (resp., generalized fine) ring.
\end{itemize}
\end{theorem}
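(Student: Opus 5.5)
For (1), I would use the augmentation map $\varepsilon:RG\to R$, a surjective ring homomorphism. Lemma~\ref{factor}(i) (for generalized fine, \cite[Lemma~2.10]{zhou}) then shows at once that $R\cong RG/\Delta(RG)$ inherits the property.

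For the group, take $1\neq g\in G$ and look at $1-g\in\Delta(RG)$. The plan is to show that $1-g\in {\rm J}(RG)$ and that $g$ has $p$-power order for a single prime $p$ lying in ${\rm J}(R)$.

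1. First I would pass to the centre. By Corollary~\ref{center} (or \cite[Lemma~2.8]{zhou} in the generalized fine case), $C(RG)$ is local, and for the generalized $t$-fine case $\mathcal{U}(C(RG))$ is torsion.
2. If $G$ has an element of infinite order, I would aim for a contradiction. The natural route is through central elements such as class sums, which requires care. An alternative first try is to note that $\ch(RG)>0$ by Lemma~\ref{char}. Then for $g$ of infinite order, $x=1-g$ is not in ${\rm J}(RG)$, since $\varepsilon$ sends $J$ into ${\rm J}(R)$ and one can test invertibility of $1-(1-g)r$. So $1-g=u+n$, and I would aim to derive a contradiction from the group-ring support of the unit $g$.
3. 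For $g$ of finite order $m$, the cyclic subring $R\langle g\rangle$ sits in the centre only when $g$ is central. So I would instead use the idempotent $e=\frac1q\sum_{i<q}g^i$ for a prime $q\mid m$ with $q$ invertible in $R$. Its image $R\to Re$ splits $RG$ nontrivially, and this contradicts the requirement (from Corollary~\ref{Jacobson factor}) that $RG/{\rm J}(RG)$ be $t$-fine, hence simple, because $e\notin{\rm J}$ and $1-e\notin{\rm J}$.
4. Hence every prime dividing an element order is non-invertible in $R$. Since $R/{\rm J}(R)$ is simple (for generalized fine as well), $p\in{\rm J}(R)$, and at most one such prime occurs because $\ch(R/{\rm J}(R))$ is a single prime.

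I expect step 2 to be the main obstacle, namely excluding elements of infinite order. The first thing I would try there is to work in the central subring generated by a suitable central element. Failing that, I would reduce modulo ${\rm J}(R)$ to a simple ring of characteristic $p$ and use known results on semiprimitivity of $kG$ for torsion-free parts.

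For (2), assume $G$ is a locally finite $p$-group and $p\in\Nil(R)$. The standard fact is that $\Delta(RG)$ is then a nil ideal. For a finitely generated, hence finite, $p$-subgroup $H$, the ideal $\Delta(\bar R H)$ with $\bar R=R/pR$ is nilpotent, because $(1-h)^{p^k}=1-h^{p^k}$ in characteristic $p$, together with the classical nilpotence of the augmentation ideal of a finite $p$-group algebra. This lifts to $\Delta(RH)$ since $pR$ is nil: an element whose image is nilpotent modulo $p\,RH$ becomes an element of $(pR)H$ after some power, and that element is nilpotent because it involves finitely many coefficients in a nil ideal. To be safe I would use that $pR$ is a nilpotent ideal, since $p$ is central and nilpotent. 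Since every element lies in some $RH$, the ideal $\Delta(RG)$ is nil. Now $RG/\Delta(RG)\cong R$ is generalized $t$-fine (resp.\ generalized fine), so Lemma~\ref{factor}(ii) (resp.\ \cite[Lemma~2.10]{zhou}) gives the property for $RG$.
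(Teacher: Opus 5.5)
\textbf{The gap is in the group-theoretic half of (1).}

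Your Step~2 has a workable shape: show $1-g\notin{\rm J}(RG)$ for $g$ of infinite order, decompose, and reach a contradiction. But neither ingredient is supplied.

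First, your justification for $1-g\notin{\rm J}(RG)$ is a non sequitur. Since $\varepsilon(1-g)=0\in{\rm J}(R)$, knowing $\varepsilon({\rm J}(RG))\subseteq{\rm J}(R)$ says nothing here. What does work is this: if $1-g\in{\rm J}(RG)$, then $1-(1-g)g=1-g+g^2$ is a unit of $RG$. Via the projection $RG\to R\langle g\rangle$ it is then a unit of $R\langle g\rangle\cong R[t,t^{-1}]$. That is impossible, because its extreme coefficients are $1$, so its product with any nonzero $\beta$ has nonzero coefficients in two distinct degrees.

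Second, the contradiction you hope to extract from $1-g=u+n$ is exactly the one-line idea the paper uses and you missed: apply $\varepsilon$, which gives $\varepsilon(u)=-\varepsilon(n)\in\mathcal U(R)\cap\Nil(R)=\emptyset$. Applied to an arbitrary $\alpha\in\Delta(RG)\setminus{\rm J}(RG)$, this shows $\Delta(RG)\subseteq{\rm J}(RG)$. The paper then concludes directly from Connell's theorem \cite[Proposition~15(i)]{con} that $G$ is a $p$-group with $p\in{\rm J}(R)$. No analysis of centres or class sums (your Step~1) is needed.

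\textbf{Step~3 also fails as written.} The element $\frac1q\sum_{i<q}g^i$ is idempotent only if $g$ has order $q$; you should take $h=g^{m/q}$ instead. More seriously, $e=\frac1q\sum_{i<q}h^i$ is central only when $\langle h\rangle$ is normal in $G$. A non-central idempotent gives no ring decomposition of $RG$. Nontrivial idempotents outside the radical are perfectly compatible with $RG/{\rm J}(RG)$ being simple; think of $E_{11}\in\mathbb M_n(F)$. The inclusion $\Delta(RG)\subseteq{\rm J}(RG)$ repairs this too: $1-e\in\Delta(RG)$ would be an idempotent in ${\rm J}(RG)$, forcing $e=1$. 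That is false, since $h\neq 1$ and $1/q\neq 0$. With both fixes, your Step~4 goes through.

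\textbf{The rest is fine.} The first half of (1) is correct. In (2), your proof that $\Delta(RG)$ is nil is a self-contained replacement for the paper's citation \cite[Proposition~16(ii)]{con}. Rely on your fallback that $pR$ is a nilpotent ideal ($p$ being central and nilpotent). The argument ``finitely many coefficients in a nil ideal'' alone would not guarantee nilpotence.
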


\begin{proof}
(1) Knowing Lemma~\ref{factor}, the ring $R$ being a homomorphic image of $RG$ is a generalized $t$-fine (resp., generalized fine) ring as well.

We assert that $\Delta(RG)\subseteq {\rm J}(RG)$. Suppose, to the contrary, that there exists $\alpha\in\Delta(RG)\setminus {\rm J}(RG)$.
Then,
\[
\alpha=u+q,
\]
where $u$ is a unit and $q$ is a nilpotent element of $RG$. Applying the augmentation map $\varepsilon$, we obtain
\[
0=\varepsilon(u)+\varepsilon(q).
\]
Hence,
\[
\varepsilon(u)=-\varepsilon(q)\in \mathcal{U}(R)\cap \Nil(R),
\]
which is false. Therefore, $\Delta(RG)\subseteq {\rm J}(RG)$, as asserted.

It now follows from \cite[Proposition~15(i)]{con} that $G$ is a $p$-group for some prime $p\in {\rm J}(R)$.

\medskip

(2) According to \cite[Proposition~16(ii)]{con}, we find that $\Delta(RG)\subseteq \Nil(RG)$.
Since
\[
RG/\Delta(RG)\cong R,
\]
it directly follows from Lemma~\ref{factor} (resp., \cite[Lemma~2.10]{zhou}) that $RG$ is a generalized $t$-fine (resp., generalized fine) ring, as claimed.
\end{proof}
Recall that a group $G$ is {\it locally solvable} if every finitely generated subgroup is solvable. It is known that each locally solvable torsion group is locally finite (see \cite[Theorem~5.4.11]{robinson}). Combining this with Theorems~\ref{NI local} and~\ref{group ring1}, we deduce the following result.

\begin{corollary}\label{group ring nilpotent}
Let $R$ be an NR ring, and let $G$ be a non-trivial locally solvable group. Then, $RG$ is a generalized $t$-fine ring if and only if $R$ is a generalized $t$-fine ring and $G$ is a $p$-group for some prime $p\in \Nil(R)$.
\end{corollary}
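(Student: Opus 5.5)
The plan is to prove the two implications separately. Theorem~\ref{group ring1}(1) will supply the constraints on $G$, and Theorem~\ref{group ring1}(2) will supply sufficiency once local finiteness is available.

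\emph{Necessity.} Assume $RG$ is generalized $t$-fine. By Theorem~\ref{group ring1}(1), $R$ is generalized $t$-fine and $G$ is a $p$-group for some prime $p\in{\rm J}(R)$. It remains to show that $p\in\Nil(R)$. Since $R$ is NR and generalized $t$-fine, Theorem~\ref{NI local} shows that ${\rm J}(R)$ is nil, so $p\in{\rm J}(R)\subseteq\Nil(R)$. Note that $p$ is central, so it generates a nil ideal; this is consistent with the hypotheses of \cite[Proposition~16(ii)]{con} used later.

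\emph{Sufficiency.} Assume $R$ is generalized $t$-fine and $G$ is a $p$-group with $p\in\Nil(R)$. Then $G$ is a torsion group. Since $G$ is also locally solvable, \cite[Theorem~5.4.11]{robinson} shows that $G$ is locally finite. Hence $G$ is a locally finite $p$-group, and Theorem~\ref{group ring1}(2) applies directly to show that $RG$ is generalized $t$-fine.

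I expect no real obstacle. The only substantive point is recognizing where each hypothesis is used. The NR hypothesis upgrades $p\in{\rm J}(R)$ to $p\in\Nil(R)$ through the nilness of ${\rm J}(R)$ in Theorem~\ref{NI local}. Local solvability supplies the local finiteness needed in Theorem~\ref{group ring1}(2). The NR hypothesis is not needed for sufficiency, and local solvability is not needed for necessity.
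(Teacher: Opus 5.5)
Your proposal is correct and takes essentially the same route as the paper. The paper also combines Theorem~\ref{group ring1}(1) with the nilness of ${\rm J}(R)$ from Theorem~\ref{NI local} for necessity, and uses the local finiteness of locally solvable torsion groups together with Theorem~\ref{group ring1}(2) for sufficiency (your aside about $pR$ being a nil ideal is harmless but unnecessary).
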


\medskip
\section{\texorpdfstring{Generalized Quasi $t$-Fine Rings}{Generalized  Quasi t-Fine Rings}}\label{sec 3}

We now consider a more general version of generalized $t$-finiteness. To this end, for any ring $R$, let the set of all {\it quasinilpotent elements} of $R$ be denoted by $\mathcal Q(R)$, that is,
\[
\mathcal Q(R) = \{ q \in R \mid 1 - xq \in U(R) \text{ for every } x \in R \text{ that commutes with } q \}.
\]
Note that
$\Nil(R)\cup{\rm J}(R)\subseteq \mathcal Q(R)$ and
$\mathcal{U}(R)+\mathcal Q(R)\subseteq R\setminus {\rm J}(R).$

Several interesting concepts have been introduced via quasinilpotents, including generalized Drazin inverses \cite{Kol}, pseudo Drazin inverses \cite{Wong}, quasipolar rings \cite{Yi}, and $UQN$-rings \cite{ramezan2027}.


We thus come to the following notion.

\begin{definition}
A ring $R$ is called {\it generalized quasi $t$-fine} if every element not lying in the Jacobson radical is the sum of a torsion unit and a quasinilpotent, equivalently,
$$R\setminus {\rm J}(R)=\mathcal{T}(R)+\mathcal Q(R).$$
\end{definition}


It is quite clear that each generalized $t$-fine ring is a generalized quasi $t$-fine ring, but we will show below that the converse manifestly fails.

\begin{example}\label{ex 31}
Let $F$ be a finite field, and let $f(x)\not\in J(F[[x]])=\langle x\rangle$. Then, $f(x)=u+xg(x)$ for some $0\neq u\in F$ and $g(x)\in F[[x]]$. Now, $u$ is a torsion unit and $xg(x)$ is a quasinilpotent element in $F[[x]]$. Thus, $F[[x]]$ is a generalized quasi $t$-fine ring that is {\it not} a generalized $t$-fine ring looking at Remark~\ref{remark 2.11}.
\end{example}


We know from Lemma~\ref{char} that each generalized $t$-fine ring has non-zero characteristic. However, the following example shows that a generalized quasi $t$-fine ring may have zero characteristic.

\begin{example}\label{ex 32}
Let $\Z_{(2)}$ be the localization of $\Z$ at the prime number $2$. Thus, $\Z_{(2)}$ is a local ring of characteristic $0$, and it is easy to see that
$$\mathcal{U}(\Z_{(2)})=\pm1 +{\rm J}(\Z_{(2)}).$$
Therefore, $\Z_{(2)}$ is a generalized quasi $t$-fine ring that is {\it not} a generalized $t$-fine ring.
\end{example}


In both Examples~\ref{ex 31} and \ref{ex 32}, the rings are local and commutative but obviously {\it not} fine. Moreover, if $R$ is a commutative ring, then $\mathcal Q(R)={\rm J}(R)$. In the following, we provide a non-commutative non-local fine ring $S$ such that $S$ is generalized quasi $t$-fine and ${\rm J}(S)\subsetneqq\mathcal Q(S)$.

To exhibit such an example, we use the ring presented in \cite[Section~5]{ramezan2027}. Let $F$ be a field, and let $V = F^{\mathbb{N}}$ be a countably infinite-dimensional vector space over $F$. Put
\[
R := \operatorname{End}_F(V) \cong \operatorname{M}_{\mathrm{cf}}(F),
\]
the ring of column-finite matrices over $F$ (each column has only finitely many nonzero entries).

Then, we define the set \( S \) consisting of those matrices \( A \in R \) which satisfy the following condition: there exists a non-negative integer \( n = n_A \) and matrices
\[
A_1, A_2 \in \mathbb{M}_{2^n}(F), \quad \text{and} \quad A_{i+2} \in \mathbb{M}_{2^{n+i}}(F), \quad i = 1, 2, \dots,
\]
such that \( A \) has the form
\[
\begin{pmatrix}
\begin{pmatrix}
\begin{pmatrix}
A_1 & A_2 \\
\mathbf{0} & A_1
\end{pmatrix} & A_3 \\
\mathbf{0} &
\begin{pmatrix}
A_1 & A_2 \\
\mathbf{0} & A_1
\end{pmatrix}
\end{pmatrix} & A_4 & \dots \\
\mathbf{0} &
\begin{pmatrix}
\begin{pmatrix}
A_1 & A_2 \\
\mathbf{0} & A_1
\end{pmatrix} & A_3 \\
\mathbf{0} &
\begin{pmatrix}
A_1 & A_2 \\
\mathbf{0} & A_1
\end{pmatrix}
\end{pmatrix} & \dots \\
\vdots & \vdots
\end{pmatrix}.
\]

\medskip 

To simplify this presentation, we just write \( A=[n_A;A_1,A_2,\ldots] \).

\medskip

Assume now that \( B = [n_B; B_1, B_2, \ldots] \) is another matrix in \( S \). We may, without loss of generality, assume that \( n_A = n_B \). If, for instance, \( n_A < n_B \), we may reinterpret \( A = [n_B; A'_1, A'_2, \ldots] \) as having the same recursive structure as \( B \) by defining a new matrix \( A'_1 \) as the top-left \( 2^{n_B} \times 2^{n_B} \) block of \( A \).

Thus, it is clear that \( S \) is closed under addition and multiplication, and hence \( S \) is a subring of \( R \).

\medskip

We now proceed by proving the following assertion.


\begin{example}\label{ex 33}
The ring \( S \) satisfies the following conditions:
\begin{itemize}
\item[(1)] ${\rm J}(S)=0$ and $S$ is not local;
\item[(2)] for each field $F$, $S$ is a fine ring; and
\item[(3)] if $F$ is a locally finite field, then $S$ is a (generalized) quasi $t$-fine ring; so each non-zero element of $S$ is a sum of a torsion unit and a quasinilpotent.
\end{itemize}
\end{example}

\begin{proof}
It has been shown in \cite[Example~5.1]{ramezan2027} that $J(S)=0$, and so obviously $S$ is not local.

Suppose \( A \in S \) is nonzero. Then, there exist two indices \( i, j \geq 1 \) such that \( a_{ij} \neq 0 \). We may choose \(n= n_A \) large enough such that \( a_{ij} \) lies in the block \( A_1 \in \mathbb{M}_{2^{n}}(F) \), where \( A=[n;A_1,A_2,\ldots] \).

As \(\mathbb{M}_{2^{n}}(F) \) is a fine ring (see \cite[Theorem]{cl}), we can write \( A_1 = U_1+N_1 \) for some invertible matrix \( U_1 \in \mathbb{M}_{2^{n}}(F) \) and some nilpotent matrix \( N_1 \in \mathbb{M}_{2^{n}}(F) \). Set
$$U:=[n;U_1,A_2,A_3,\ldots],\quad\text{and}\quad N:=A-U=[n;N_1,0,0,\dots].$$
Now, it is readily seen that $U\in \mathcal U(S)$ and $N\in\Nil(S)$. Therefore, $S$ is a fine ring, as asserted.

Now, suppose that $F$ is a locally finite field. Then, by \cite[Corollary~11]{ramezan2027}, \(\mathbb{M}_{2^{n}}(F) \) is a $t$-fine ring. Therefore, in the above decomposition \( A_1 = U_1+N_1 \), we may assume that the matrix $U_1$ is torsion. This time, let
\[
V := [n;U_1,0,0,\dots],\quad\text{and}\quad Q=A-V=[n;N_1,A_2,A_3,\ldots].
\]
It is clear that \( V\in\mathcal T(S)\). We will show now that $Q\in\mathcal Q(S)$.

In fact, suppose \( X=[n_X;X_1,X_2,\ldots] \in S \) satisfies \( QX = XQ \). Since \( N_1^k=0 \) for some positive integer $k$, \( Q^k \) is strictly upper triangular. Therefore, \( Q^{k \cdot n_X} \) is also strictly upper triangular and satisfies
\[
\left[Q^{k \cdot n_X}\right]_{ij} = 0 \quad \text{for all } j - i < n_X.
\]
This gives that \( (XQ)^{k \cdot n_X} = X^{k \cdot n_X}  Q^{k \cdot n_X} \) is strictly upper triangular. It thus follows that \( I - (XQ)^{k \cdot n_X} \) is invertible. Hence, \( I - XQ \) is invertible too. Consequently, \( Q \) is quasinilpotent, as required. Hence, $S$ is a (generalized) quasi $t$-fine ring, as claimed.
\end{proof}


It is known that if $F$ is a field, then $\mathcal Q(\mathbb M_n(F))=\Nil(\mathbb M_n(F))$ (see, e.g., \cite[Example~2.2]{Cui}). In the following result, we seek a generalization of this replacing $F$ with a commutative ring $R$.

\begin{lemma}\label{lem:max}
Let $R$ be a commutative ring. For $A\in \mathbb M_n(R)$, the following statements are equivalent:
\begin{enumerate}
\item[(i)] $A$ is quasinilpotent;
\item[(ii)] For every maximal ideal $\mathfrak m$ of $R$, the matrix $\overline{A}\in \mathbb  M_n(R/\mathfrak m)$ is nilpotent.
\end{enumerate}
\end{lemma}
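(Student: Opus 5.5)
The plan is to reduce everything to fields via determinants. Throughout I use two standard facts for a commutative ring $R$. First, a matrix $M\in\mathbb M_n(R)$ is invertible if and only if $\det M\in\mathcal U(R)$. Second, an element of $R$ is a unit if and only if it lies in no maximal ideal. Consequently, $M$ is invertible in $\mathbb M_n(R)$ if and only if $\overline{M}$ is invertible in $\mathbb M_n(R/\mathfrak m)$ for every maximal ideal $\mathfrak m$, because $\overline{\det M}=\det\overline{M}$.

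\emph{(ii)$\Rightarrow$(i).} Let $X\in\mathbb M_n(R)$ commute with $A$. Fix a maximal ideal $\mathfrak m$. Then $\overline X$ commutes with $\overline A$ in $\mathbb M_n(R/\mathfrak m)$, and $\overline A$ is nilpotent by hypothesis. Since they commute, $(\overline X\,\overline A)^k=\overline X^{\,k}\overline A^{\,k}$, so $\overline X\,\overline A$ is nilpotent and $I-\overline X\,\overline A$ is invertible. This holds for every $\mathfrak m$, so by the determinant criterion $I-XA$ is invertible. Hence $A\in\mathcal Q(\mathbb M_n(R))$.

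\emph{(i)$\Rightarrow$(ii).} Argue by contradiction. Suppose that for some maximal ideal $\mathfrak m$, with $k:=R/\mathfrak m$, the matrix $\overline A$ is not nilpotent. The idea is to produce a witness $X$ that is a polynomial in $A$, since any such $X$ automatically commutes with $A$.
\begin{enumerate}
\item Since $\overline A$ is not nilpotent, its minimal polynomial over $k$ has the form $t^e g(t)$ with $g$ nonconstant and $g(0)\neq 0$.
\item Pick an irreducible factor $h$ of $g$. Then $h(0)\neq 0$, so $t$ is invertible in the field $k[t]/(h)$. Choose $\bar p\in k[t]$ with $t\bar p(t)\equiv 1 \pmod h$.
\item Let $\lambda$ be a root of $h$ in an algebraic closure $\bar k$. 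Then $\lambda\neq 0$ is an eigenvalue of $\overline A$, and $1-\lambda\bar p(\lambda)=0$.
\item The eigenvalues of $I-\bar p(\overline A)\overline A$ over $\bar k$ are the values $1-\mu\bar p(\mu)$, where $\mu$ runs over the eigenvalues of $\overline A$ (by triangularizing $\overline A$ over $\bar k$). Taking $\mu=\lambda$ gives the eigenvalue $0$, so $\det\bigl(I-\bar p(\overline A)\overline A\bigr)=0$.
\item Lift $\bar p$ to some $p\in R[t]$ and set $X:=p(A)$. Then $X$ commutes with $A$, and $\det(I-XA)\in\mathfrak m$, so $\det(I-XA)$ is not a unit.
\item Therefore $I-XA$ is not invertible, contradicting $A\in\mathcal Q(\mathbb M_n(R))$.
\end{enumerate}

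The only step requiring care is this construction of a commuting witness. The key choices are to take $X$ as a polynomial in $A$ and to read off non-invertibility through an eigenvalue of $\overline A$ over the algebraic closure $\bar k$. The rest is bookkeeping with determinants.
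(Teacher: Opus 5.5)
Your proposal is correct and follows essentially the same route as the paper. For (ii)$\Rightarrow$(i) you reduce modulo each maximal ideal and use the determinant criterion. For (i)$\Rightarrow$(ii) you take the witness $X=p(A)$, with $p$ lifted from a B\'ezout relation between $t$ and an irreducible factor of the minimal polynomial of $\overline{A}$ having nonzero constant term. The only difference is cosmetic: you show that $I-\bar p(\overline{A})\overline{A}$ is singular via an eigenvalue $\lambda\neq 0$ over $\bar k$, whereas the paper writes it as $f(\overline{A})g(\overline{A})$ and notes that $f(\overline{A})$ cannot be invertible because $f$ divides the minimal polynomial.
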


\begin{proof}
(i) $\Rightarrow$ (ii). Assume that $A$ is quasinilpotent, and let $\mathfrak m$ be a maximal ideal of $R$. Put $F:=R/\mathfrak m$. Suppose, to the contrary, that $\overline{A}$ is not nilpotent in $\mathbb M_n(F)$, and let $p(x)\in F[x]$ be the minimal polynomial of $\overline{A}$. Then, $p(x)\neq x^k$ for every positive integer $k$. Hence, $p(x)$ has an irreducible factor $f(x)$ such that $f(0)\neq0$. Since $\gcd(f(x),x)=1$, there exist $g(x),h(x)\in F[x]$ such that
\[
f(x)g(x)+xh(x)=1.
\]

Choose a polynomial $h_1(x)\in R[x]$ whose reduction modulo $\mathfrak m$ is $h(x)$, and put
\[
X:=h_1(A).
\]
Then, $X$ commutes with $A$. Since $A$ is quasinilpotent, ${\rm I}_n-AX$ is invertible in $\mathbb M_n(R)$. Therefore,
\[
\overline{{\rm I}_n-AX}
=\overline{{\rm I}_n}-\overline{A}h(\overline{A})
=f(\overline{A})g(\overline{A})
\]
is invertible in $\mathbb M_n(F)$. Since the two factors commute each other, both $f(\overline{A})$ and $g(\overline{A})$ are invertible. However, $f$ is a proper factor of the minimal polynomial $p$ of $\overline{A}$, so $f(\overline{A})$ cannot be invertible. This contradiction shows that $\overline{A}$ is nilpotent, as asked.

\medskip

(ii) $\Rightarrow$ (i). Assume that $\overline{A}$ is nilpotent for every maximal ideal $\mathfrak m$ of $R$. Let $X\in \mathbb M_n(R)$ commute with $A$. For an arbitrary maximal ideal $\mathfrak m$, we have
$\overline{X}\overline{A}
=
\overline{A}\overline{X}$.
Since $\overline{A}$ is nilpotent, the matrix $\overline{A}\overline{X}$ is also nilpotent. Hence, $\overline{{\rm I}_n}-\overline{A}\overline{X}$
is invertible in $\mathbb M_n(R/\mathfrak m)$. Consequently,
\[
\det(\overline{{\rm I}_n}-\overline{A}\overline{X})\neq\overline0,
\]
and, therefore,
\[
\det({\rm I}_n-AX)\notin\mathfrak m.
\]
Since $\mathfrak m$ was arbitrary, $\det({\rm I}_n-AX)$ belongs to no maximal ideal of $R$. Thus, it is a unit in $R$ and, consequently, ${\rm I}_n-AX$ is invertible in $\mathbb M_n(R)$. Hence, $A$ is quasinilpotent, as wanted.
\end{proof}


The following claim may be known, but since we are unaware of it appearing in the literature, we include a proof for completeness and the reader's convenience.

\begin{lemma}\label{quasi matrix}
Let $R$ be a commutative ring, and let
\[
A:=
\begin{pmatrix}
B&C\\
\bf 0&D
\end{pmatrix}
\in \mathbb M_n(R)
\]
be a block upper-triangular matrix, where $B\in \mathbb M_r(R)$ and $D\in \mathbb M_s(R)$ with $r+s=n$. If both $B$ and $D$ are quasinilpotent, then $A$ too is quasinilpotent.
\end{lemma}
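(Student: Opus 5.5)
The natural route goes through Lemma~\ref{lem:max}, which reduces quasinilpotency of a matrix over the commutative ring $R$ to nilpotency of its reductions modulo every maximal ideal. By the implication (i) $\Rightarrow$ (ii) of Lemma~\ref{lem:max}, applied to $B\in\mathbb M_r(R)$ and $D\in\mathbb M_s(R)$ separately, for every maximal ideal $\mathfrak m$ of $R$ the reductions $\overline{B}\in\mathbb M_r(R/\mathfrak m)$ and $\overline{D}\in\mathbb M_s(R/\mathfrak m)$ are nilpotent.

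Next, fix a maximal ideal $\mathfrak m$. Reduction modulo $\mathfrak m$ is a ring homomorphism applied entrywise, so it preserves the block shape:
\[
\overline{A}=\begin{pmatrix}\overline{B}&\overline{C}\\ \mathbf 0&\overline{D}\end{pmatrix}.
\]
Powers of a block upper-triangular matrix remain block upper-triangular, with diagonal blocks the corresponding powers. Choose $k$ with $\overline{B}^k=0$ and $\overline{D}^k=0$. Then $\overline{A}^k$ has the form $\begin{pmatrix}0&C'\\ \mathbf 0&0\end{pmatrix}$, and squaring gives $\overline{A}^{2k}=0$. Hence $\overline{A}$ is nilpotent for every maximal ideal $\mathfrak m$.

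Finally, the implication (ii) $\Rightarrow$ (i) of Lemma~\ref{lem:max} yields that $A$ is quasinilpotent.

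There is no real obstacle; the only point to check is that both directions of Lemma~\ref{lem:max} apply to matrices of the different sizes $r$, $s$ and $n$. They do, because that lemma holds for any size. The degenerate cases $r=0$ or $s=0$ are trivial.

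A direct argument without Lemma~\ref{lem:max} would be awkward. An $X$ commuting with $A$ need not be block upper-triangular, so one cannot simply compute ${\rm I}_n-AX$ blockwise. This is why I would route the proof through the residue fields.
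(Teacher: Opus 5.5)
Your proposal is correct and is essentially the paper's own proof: you apply Lemma~\ref{lem:max} (i)$\Rightarrow$(ii) to $B$ and $D$, observe that $\overline{A}$ is nilpotent modulo every maximal ideal, and conclude with (ii)$\Rightarrow$(i). The explicit bound $\overline{A}^{2k}=0$ fills in the one step the paper states without detail.
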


\begin{proof}
Let $\mathfrak m$ be a maximal ideal of $R$. So, Lemma~\ref{lem:max} works to get that the matrices $\overline{B}$ and $\overline{D}$ are nilpotent in $\mathbb M_r(R/\mathfrak m)$ and $\mathbb M_s(R/\mathfrak m)$, respectively. Therefore,
\[
\overline{A}
=\begin{pmatrix}
\overline{B}&\overline{C}\\
\bf0&\overline{D}
\end{pmatrix}
\in \mathbb M_n(R/\mathfrak m)
\]
is too nilpotent. Consequently, by another application of Lemma~\ref{lem:max}, $A$ is quasinilpotent in $\mathbb M_n(R)$, as desired.
\end{proof}


The following lemma can be proved similarly to Corollary~\ref{center}, so we skip its verification.

\begin{lemma}\label{center2}
If $R$ is a generalized quasi $t$-fine ring, then $C(R)$ is a local ring.
\end{lemma}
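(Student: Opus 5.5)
The plan is to follow the proof of Corollary~\ref{center} step by step. The only change is that the unipotent/torsion argument there is replaced by the defining property of quasinilpotents. Torsion-ness of units in $C(R)$ is not claimed here, so we only need to show that every element of $C(R)$ outside ${\rm J}(C(R))$ is a unit of $C(R)$. That already forces $C(R)$ to be local.

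\emph{Step 1: $r\notin {\rm J}(R)$.} Let $r\in C(R)\setminus {\rm J}(C(R))$. Then there is $x\in C(R)$ such that $1-rx$ is not a unit of $C(R)$. Recall $\mathcal U(C(R))=\mathcal U(R)\cap C(R)$, since the inverse of a central unit is central. Hence $1-rx$ is not a unit of $R$, and therefore $r\notin {\rm J}(R)$.

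\emph{Step 2: $r$ is a unit of $R$.} Since $R$ is generalized quasi $t$-fine, write $r=u+q$ with $u\in\mathcal T(R)$ and $q\in\mathcal Q(R)$. Because $r$ is central, $u=r-q$ commutes with $q$, and hence so does $u^{-1}$. Put $y:=-u^{-1}$. This element commutes with $q$, so by the definition of $\mathcal Q(R)$ the element $1-yq=1+u^{-1}q$ is a unit of $R$. Consequently
\[
r=u\,(1+u^{-1}q)
\]
is a product of two units, hence $r\in\mathcal U(R)$.

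\emph{Step 3: conclusion.} Since $r$ is central, $r\in\mathcal U(R)\cap C(R)=\mathcal U(C(R))$. Thus every element of $C(R)\setminus{\rm J}(C(R))$ is invertible in $C(R)$. Every non-unit of $C(R)$ therefore lies in ${\rm J}(C(R))$, so the non-units form an ideal, and $C(R)$ is local.

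The only delicate point is Step 2. There we must check that the element fed into the definition of quasinilpotence really commutes with $q$. This is exactly where the centrality of $r$ is used: it makes $u$, and hence $u^{-1}$, commute with $q$.
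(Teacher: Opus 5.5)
Your proof is correct and is essentially the paper's argument. The paper omits the verification, saying it is analogous to Corollary~\ref{center}, and your proof is exactly that argument with the unipotent-implies-torsion step replaced by the observation that $1+u^{-1}q$ is a unit. That step holds because $-u^{-1}$ commutes with the quasinilpotent $q$, which follows from the centrality of $r$.
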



Let $R$ be a ring. If $a\in \mathcal Q(R)$ and $u\in\mathcal U(R)$, then $uau^{-1}\in \mathcal Q(R)$ (see, e.g., \cite[Lemma~2.3]{Cui}). Therefore, if $x\in R$ is a generalized quasi $t$-fine element, then $uxu^{-1}$ is also a generalized quasi $t$-fine element for every $u\in \mathcal{U}(R)$. Combining this observation with Lemma~\ref{quasi matrix}, Lemma~\ref{center2} and an argument similar to that of the proof of Theorem~\ref{matrix2}, we deduce the following necessary and sufficient condition.

\begin{proposition}\label{matrix3}
Let $F$ be a field with at least three elements of positive characteristic, and let $R$ be a commutative $F$-algebra. Then, $\mathbb{M}_n(R)$ is generalized quasi $t$-fine if and only if $R$ is generalized quasi $t$-fine for every positive integer $n$.
\end{proposition}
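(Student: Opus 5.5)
We prove the two implications separately. Sufficiency is handled by induction along the lines of Theorem~\ref{matrix2}, and necessity is reduced, through the centre of $\mathbb M_n(R)$, to Lemma~\ref{center2} followed by a direct decomposition in $R$.

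\emph{Necessity.} Assume $\mathbb M_n(R)$ is generalized quasi $t$-fine. Its centre is $C(\mathbb M_n(R))=R\,\mathrm I_n\cong R$, so Lemma~\ref{center2} shows that $R$ is local.

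Now let $a\in R\setminus{\rm J}(R)$. Then $a$ is a unit, so the residue field $R/{\rm J}(R)$ is the relevant object, and $\mathcal Q(R)={\rm J}(R)$ because $R$ is commutative. Hence it suffices to show that every unit of $R$ is congruent modulo ${\rm J}(R)$ to a torsion unit. To do this, write $a\,\mathrm I_n=T+Q$ with $T\in\mathcal T(\mathbb M_n(R))$ and $Q\in\mathcal Q(\mathbb M_n(R))$. By Lemma~\ref{lem:max}, $\overline Q$ is nilpotent over the field $k=R/{\rm J}(R)$. Therefore $\overline T=\bar a\,\mathrm I_n-\overline Q$ has characteristic polynomial $(x-\bar a)^n$. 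Since $\overline T^m=\mathrm I_n$, we get $\bar a^m=1$ in $k$.

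The remaining task, and the point I expect to be most delicate, is to lift $\bar a$ to a torsion unit of $R$. I would try to take $u:=\det T$ or a suitable power of it. The entries of $T$ are not in general such that $\det T\equiv a^n$, since only the reduction of $T$ is controlled. Still, $\det T$ is a torsion unit of $R$ with $\overline{\det T}=\bar a^n$.

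This lifts $\bar a^n$ but not $\bar a$ itself. To get $\bar a$, I would compare the cases: if $\gcd(n,\operatorname{ord}\bar a)=1$, take a suitable power of $\det T$; otherwise use the trace or a coefficient of the characteristic polynomial of $T$. The characteristic polynomial of $T$ lies in $R[x]$, its roots modulo ${\rm J}(R)$ are all $\bar a$, and $T$ is torsion. Applying Hensel-type reasoning, or better applying the decomposition to a matrix like $\operatorname{diag}(a,1,\dots,1)$ instead of $a\,\mathrm I_n$, whose reduced torsion part has determinant $\bar a$, should give $\det T\equiv a$. This yields a torsion $u\in R$ with $a-u\in{\rm J}(R)=\mathcal Q(R)$.

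\emph{Sufficiency.} Assume $R$ is generalized quasi $t$-fine. Then $R$ is local with $\mathcal Q(R)={\rm J}(R)$, and ${\rm J}(\mathbb M_n(R))=\mathbb M_n({\rm J}(R))$. Quasinilpotence and generalized quasi $t$-fineness are preserved under conjugation, as observed before the statement. Hence the case analysis of Theorem~\ref{matrix2} (Cases 1–3, using the elementary conjugations $T_{ij}(x)$, Remark~\ref{r1}(ii), and $|F|\ge 3$) reduces us to Case 1.

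In Case 1 the block decomposition gives $N_A=\begin{pmatrix}N_B&C\\ \mathbf0&N_E\end{pmatrix}$, which is quasinilpotent by Lemma~\ref{quasi matrix}. The lower-triangular $T_A$ has a power of the form $\begin{pmatrix}\mathrm I&\mathbf0\\ D'&\mathrm I\end{pmatrix}$. Because $\ch F=p>0$, this power satisfies $(T_A^m)^p=\begin{pmatrix}\mathrm I&\mathbf0\\ pD'&\mathrm I\end{pmatrix}=\mathrm I$, so $T_A$ is torsion. Positive characteristic is used exactly here, replacing the appeal to Lemma~\ref{char}.
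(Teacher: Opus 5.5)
Your sufficiency direction is correct and follows the route the paper intends: the case analysis of Theorem~\ref{matrix2}, with Lemma~\ref{quasi matrix} handling the block-triangular summand. Your remark that $\begin{pmatrix}\mathrm I&\mathbf 0\\ D'&\mathrm I\end{pmatrix}^p=\mathrm I$ because $p\cdot 1_R=0$ is exactly where positive characteristic is needed.

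\textbf{The gap.} The necessity direction has a genuine gap, at the step you flagged yourself. Locality of $R$ together with $\bar a^m=1$ in $k=R/{\rm J}(R)$ is not enough. For example, $R=\mathbb F_3[t]_{(t^2+1)}$ is local with residue field $\mathbb F_9$. Yet $\mathcal T(R)=\{\pm1\}$, since roots of unity in $\mathbb F_3(t)$ lie in $\mathbb F_3$. So $\bar t$ has no torsion lift, and $R$ is not generalized quasi $t$-fine.

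You therefore have to extract a torsion element of $R$ itself from $T$, and none of your suggestions does this:
\begin{itemize}
\item Powers of $\det T$ recover $\bar a$ only when $\gcd(n,\operatorname{ord}\bar a)=1$.
\item The trace and the other coefficients of $\chi_T$ have no reason to be torsion. The trace reduces to $n\bar a$, which is $0$ when $p\mid n$.
\item Hensel's lemma is unavailable: $R$ need not be Henselian, and $\overline{\chi_T}=(x-\bar a)^n$ has no coprime factorization.
\item For $A=\operatorname{diag}(a,1,\dots,1)$ the summands $T$ and $Q$ need not commute, so $\det\overline T$ is not controlled by $\overline A$. For instance, $\overline Q=\begin{pmatrix}1&1\\-1&-1\end{pmatrix}$ is nilpotent and $\det(\operatorname{diag}(\bar a,1)-\overline Q)=2\bar a-1$, which differs from $\bar a$ whenever $\bar a\neq 1$.
\end{itemize}

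\textbf{A way to close it.} Show that a $p$-power of $T$ is a scalar matrix.
\begin{enumerate}
\item Write $T^m=\mathrm I$ with $m=p^rs$ and $p\nmid s$, and set $T_1=T^{p^r}$ and $b=\bar a^{p^r}$. Since $x^s-1$ is separable and $\overline{T_1}$ has the single eigenvalue $b$, we get $\overline{T_1}=b\,\mathrm I$.
\item Let $g\in\mathbb F_p[x]$ be the minimal polynomial of $b$, and write $x^s-1=gh$. Then $h(T_1)$ is invertible, because its reduction is $h(b)\mathrm I$. Hence $g(T_1)=0$.
\item The commutative ring $S=R[T_1]$ is a finitely generated $R$-module. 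The ideal $S\cap\mathbb M_n({\rm J}(R))$ is maximal in $S$ with residue field $k$. It lies in ${\rm J}(S)$, since by Cayley--Hamilton an element of $S$ invertible in $\mathbb M_n(R)$ has its inverse in $S$. So $S$ is local.
\item On the other hand, $S/{\rm J}(R)S$ is a quotient of $k[x]/(g)\cong k^{\deg g}$. Being local, it must be $k$. Thus $S=R\,\mathrm I+{\rm J}(R)S$, and Nakayama's lemma gives $S=R\,\mathrm I$.
\item Hence $T_1=c\,\mathrm I$ with $c\in R$, $c^s=1$ and $\bar c=\bar a^{p^r}$. Since $p\nmid\operatorname{ord}\bar a$, a suitable power of $c$ is a torsion unit of $R$ lifting $\bar a$.
\end{enumerate}

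Note that the paper's sketch cites only Lemma~\ref{center2} for this direction. That lemma gives locality of $R$ but not this lifting, so an argument of this kind is needed in any case.
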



Before presenting our final results, we need the following expected technicality.

\begin{lemma}\label{factor2}
Let $R$ be a ring of non-zero characteristic, and let $I$ be a nil-ideal of $R$. If $R/I$ is a generalized quasi $t$-fine ring, then so is $R$.
\end{lemma}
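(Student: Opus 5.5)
The plan is to follow the proof of Lemma~\ref{factor}(ii). The only new ingredients are checking the quasinilpotent part directly, and replacing the use of Lemma~\ref{unipotent}, which needs the generalized $t$-fine hypothesis, with the positive-characteristic hypothesis.

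\emph{Reduction to $R/I$.} Let $a\in R\setminus{\rm J}(R)$. Since $I$ is nil, $I\subseteq{\rm J}(R)$, and therefore ${\rm J}(R/I)={\rm J}(R)/I$. Hence $\bar a\notin{\rm J}(R/I)$. Because $R/I$ is generalized quasi $t$-fine, we can write $\bar a=\bar u+\bar q$, where $\bar u\in\mathcal T(R/I)$ and $\bar q\in\mathcal Q(R/I)$. Choose a preimage $u\in R$ of $\bar u$ and put $q:=a-u$, so that $a=u+q$.

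\emph{The unit part is a torsion unit.} Units lift modulo a nil ideal (in fact modulo any ideal contained in ${\rm J}(R)$), so $u\in\mathcal U(R)$. If $\bar u^{t}=\bar 1$, then $u^{t}=1+c$ with $c\in I$, and $c$ is nilpotent because $I$ is nil. Thus $u^t$ is unipotent. Since $\operatorname{char}(R)>0$, \cite[Theorem~3.2]{unipotent} gives that unipotents are torsion; this is exactly how it was used in Case~1 of Theorem~\ref{matrix2}, where Lemma~\ref{char} only served to supply positive characteristic. Hence $u^t$, and therefore $u$, is a torsion unit.

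\emph{The remaining part is quasinilpotent.} Let $x\in R$ commute with $q$. Then $\bar x$ commutes with $\bar q$, so $\bar 1-\bar x\bar q$ is a unit in $R/I$, because $\bar q\in\mathcal Q(R/I)$. Since $I\subseteq{\rm J}(R)$, an element that is invertible modulo $I$ is invertible in $R$, so $1-xq\in\mathcal U(R)$. Thus $q\in\mathcal Q(R)$.

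The main obstacle is the torsion step: without Lemma~\ref{unipotent} available, we must rely on the cited result that unipotent elements are torsion in positive characteristic. This is precisely why the hypothesis on the characteristic appears in the statement, and Example~\ref{ex 32} suggests that it cannot simply be dropped.
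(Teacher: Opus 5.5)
Your proof is correct and follows the paper's argument. You lift the decomposition from $R/I$, then use \cite[Theorem~3.2]{unipotent} together with $\operatorname{char}(R)>0$ to see that the lifted unit is torsion, and show the remainder is quasinilpotent; the only difference is that you verify this last step directly (elements invertible modulo $I$ are invertible, since $I\subseteq{\rm J}(R)$), whereas the paper cites \cite[Lemma~4.1(2)]{djhm}.
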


\begin{proof}
Choose $a\in R\setminus {\rm J}(R)$. Since $I\subseteq {\rm J}(R)$, we can write $\bar a\not\in {\rm J}(R/I)$. Thus,
\[
\bar a=\bar u+\bar q,
\]
where $\bar u\in \mathcal T(R/I)$  and $\bar q\in\mathcal Q(R/I)$.  Therefore,
$$a=u+(q+b)$$
for some $b\in I$. An appeal to \cite[Lemma~4.1(2)]{djhm} leads to $q+b\in\mathcal Q(R)$. Suppose next that $\bar u^t=\bar1$ for some positive integer $t$. Then, $u^t=1+c$ for some nilpotent element $c\in I$. Employing  \cite[Theorem~3.2]{unipotent}, we conclude that $u\in \mathcal{T}(R)$, as required.
\end{proof}


Our final result and its consequence sound thus.

\begin{proposition}\label{group ring3}
Let $R$ be a ring, and let $G$ be a non-trivial group.
\begin{itemize}
\item[(1)] If $R$ is a generalized quasi $t$-fine ring and $G$ is a locally finite $p$-group for some prime $p\in \Nil(R)$, then $RG$ is a generalized quasi $t$-fine ring.
\item[(2)] If $R$ is commutative and $RG$ is a generalized quasi $t$-fine ring, then $R$ is a generalized quasi $t$-fine ring and $G$ is a $p$-group for some prime $p\in {\rm J}(R)$.
\end{itemize}
\end{proposition}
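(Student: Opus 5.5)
Part (1) follows the proof of Theorem~\ref{group ring1}(2), with Lemma~\ref{factor2} in place of Lemma~\ref{factor}. Since $G$ is a locally finite $p$-group and $p\in\Nil(R)$, \cite[Proposition~16(ii)]{con} gives $\Delta(RG)\subseteq\Nil(RG)$, so $\Delta(RG)$ is a nil-ideal. We also have $RG/\Delta(RG)\cong R$, which is generalized quasi $t$-fine by hypothesis. Lemma~\ref{factor2} additionally requires $RG$ to have non-zero characteristic. This holds because $p\in\Nil(R)$ gives $p^k=0$ in $R$ for some $k$, so $\operatorname{char}(RG)=\operatorname{char}(R)$ divides $p^k$. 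Lemma~\ref{factor2} then shows that $RG$ is generalized quasi $t$-fine.

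For part (2), I first show that $R$ is generalized quasi $t$-fine using the augmentation $\varepsilon:RG\to R$, a surjective ring homomorphism. Take $a\in R\setminus{\rm J}(R)$. Then $a\notin{\rm J}(RG)$, since $\varepsilon({\rm J}(RG))\subseteq{\rm J}(R)$ for a surjection. Write $a=u+q$ with $u\in\mathcal T(RG)$ and $q\in\mathcal Q(RG)$, so that $a=\varepsilon(u)+\varepsilon(q)$ with $\varepsilon(u)\in\mathcal T(R)$. It remains to see that $\varepsilon(q)\in\mathcal Q(R)$. Because $R$ is commutative, $\mathcal Q(R)={\rm J}(R)$. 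Given $y\in R$, the element $y\cdot 1\in RG$ is central, hence commutes with $q$. Therefore $1-yq$ is a unit of $RG$, and applying $\varepsilon$ shows $1-y\varepsilon(q)\in\mathcal U(R)$. As $y$ was arbitrary, $\varepsilon(q)\in{\rm J}(R)$.

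Next I show that $G$ is a $p$-group with $p\in{\rm J}(R)$. Following Theorem~\ref{group ring1}(1), it suffices to prove $\Delta(RG)\subseteq{\rm J}(RG)$ and then invoke \cite[Proposition~15(i)]{con}. Suppose $\alpha\in\Delta(RG)\setminus{\rm J}(RG)$, and write $\alpha=u+q$ as above. Then $0=\varepsilon(u)+\varepsilon(q)$, where $\varepsilon(u)$ is a unit and $\varepsilon(q)\in{\rm J}(R)$ by the previous paragraph. This is impossible, since a unit cannot lie in ${\rm J}(R)$. The argument needs $\varepsilon(\mathcal Q(RG))\subseteq\mathcal Q(R)$, and this is exactly where commutativity of $R$ enters: it guarantees that every $y\in R$ commutes with $q$ inside $RG$, so elements of $R$ can serve as test elements for quasinilpotence.

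The main obstacle is precisely this transfer of quasinilpotence through $\varepsilon$. In the generalized $t$-fine case it was automatic, because nilpotents map to nilpotents. Here one must check both that the unit $1-yq$ of $RG$ maps to the unit $1-y\varepsilon(q)$ of $R$, and that $\mathcal Q(R)$ coincides with ${\rm J}(R)$ for commutative $R$ (noted earlier in the paper). With this in hand, the rest is a direct copy of the proof of Theorem~\ref{group ring1}.
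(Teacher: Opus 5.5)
Your proposal is correct and follows the paper's proof essentially step for step. Part (1) goes through \cite[Proposition~16(ii)]{con} and Lemma~\ref{factor2}, and part (2) goes through the augmentation map, the inclusion $\varepsilon(\mathcal Q(RG))\subseteq\mathcal Q(R)$, the argument that $\Delta(RG)\subseteq{\rm J}(RG)$, and \cite[Proposition~15(i)]{con}. You also make explicit two points the paper leaves implicit: that $RG$ has non-zero characteristic, as Lemma~\ref{factor2} requires, and why $\varepsilon(\mathcal Q(RG))\subseteq{\rm J}(R)=\mathcal Q(R)$ for commutative $R$, namely that elements of $R$ are central in $RG$.
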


\begin{proof}
(1) This follows from an argument analogous to that in the proof of Theorem~\ref{group ring1}(2) accomplished with Lemma~\ref{factor2}.

(2) Since \( R \) is commutative, it is easy to see that
$$\varepsilon(\mathcal Q(RG)) \subseteq \mathcal Q(R).$$

Suppose \( a \in R\setminus{\rm J}(R) \). Then, there exist \( u \in \mathcal T(RG) \) and \( q \in\mathcal Q(RG) \) such that \( a = u+q \). Hence,
$$ a = \varepsilon(u)+\varepsilon(q) \in \mathcal T(R)+\mathcal Q(R).$$
Therefore, \( R \) is a generalized quasi $t$-fine ring.

Now, as in the proof of Theorem~\ref{group ring1}(1) we can show that $\Delta(RG)\subseteq {\rm J}(RG)$, and so $G$ is a $p$-group for some prime $p\in {\rm J}(R)$, as promised.
\end{proof}


With the help of Theorem~\ref{group ring3} and \cite{local}, we obtain the following criterion.

\begin{corollary}
Let $F$ be a field, and let $G$ be a non-trivial locally finite group. Then, $FG$ is a generalized quasi $t$-fine ring if and only if $F$ is a locally finite field and $G$ is a $p$-group for some prime $p=\ch(F)$. In this case, $FG$ is a local ring.
\end{corollary}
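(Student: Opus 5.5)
We prove the two implications separately. For the direction from right to left, assume that $F$ is locally finite and that $G$ is a $p$-group with $p=\ch(F)$. Then $p=0$ in $F$, so in particular $p\in\Nil(F)$. Also $F$ is a field, so ${\rm J}(F)=0$. Every nonzero element of $F$ is a unit, and every unit of a locally finite field is torsion. Hence $F\setminus\{0\}=\mathcal T(F)$, and therefore $F$ is generalized ($t$-fine, hence) quasi $t$-fine. Since $G$ is assumed locally finite, Proposition~\ref{group ring3}(1) applies and shows that $FG$ is generalized quasi $t$-fine. For the locality claim we use the following facts. By \cite[Proposition~16(ii)]{con}, the augmentation ideal $\Delta(FG)$ is nil. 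Moreover $FG/\Delta(FG)\cong F$ is a field, so $\Delta(FG)$ is the unique maximal ideal and $FG$ is local. This is also the content of the classical result in \cite{local}.

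For the direction from left to right, assume $FG$ is generalized quasi $t$-fine. Since $F$ is commutative, Proposition~\ref{group ring3}(2) gives two things. First, $F$ is generalized quasi $t$-fine. Second, $G$ is a $p$-group for some prime $p\in{\rm J}(F)=0$. Thus $p=0$ in $F$, so $\ch(F)=p>0$.

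Because ${\rm J}(F)=0$ and $F$ is a field, $\mathcal Q(F)={\rm J}(F)=0$; recall that $\mathcal Q(R)={\rm J}(R)$ for commutative $R$. The defining equality therefore reads $F\setminus\{0\}=\mathcal T(F)$. So every nonzero element of $F$ is a root of unity. Each element of $F$ is then algebraic over the prime field $\mathbb F_p$. Consequently every finitely generated subfield is a finite extension of $\mathbb F_p$, hence finite, and $F$ is locally finite.

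The step most likely to need care is the local ring assertion, specifically citing the correct result that $\Delta(FG)$ is nil for a locally finite $p$-group $G$ in characteristic $p$. Once that is in hand the argument above is short. Everything else follows directly from Proposition~\ref{group ring3} together with the observation that, in a field, quasinilpotents are zero.
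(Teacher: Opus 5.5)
Your proof is correct and follows the paper's route. Both directions come from Proposition~\ref{group ring3}(1),(2), together with the observation that $\mathcal Q(F)={\rm J}(F)=0$ forces $F\setminus\{0\}=\mathcal T(F)$. Locality follows from $\Delta(FG)$ being nil (hence contained in ${\rm J}(FG)$) with $FG/\Delta(FG)\cong F$, which is exactly what the paper's appeal to \cite{local} supplies.
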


One could define a \textit{quasi $t$-fine ring} to be a ring $R$ such that each non-zero element can be written as a sum of a torsion unit and a quasinilpotent. In this case, ${\rm J}(R)=0$, since if $x \in {\rm J}(R)$ is non-zero, we may write $x = u + q$, where $u \in \mathcal{T}(R)$ and $q \in \mathcal{Q}(R)$. Then $q = x - u$ is invertible, a contradiction. For example, the ring $S$ presented in Example~\ref{ex 33}(3) is a quasi $t$-fine ring.

Obviously, each $t$-fine ring is a quasi $t$-fine ring, but we do not know whether there exists a quasi $t$-fine ring that is not $t$-fine. Thus, we conclude this paper with the following problem, whose answer is currently unknown to us.

\begin{problem}
Does there exist a quasi $t$-fine ring that is not fine, and, if not, does there exist a generalized quasi $t$-fine ring that is not generalized fine?
\end{problem}

\bigskip

\noindent{\bf Acknowledgments.} The first author (Mai Hoang Bien) is funded by the Vietnam National Foundation for Science and Technology Development (NAFOSTED) under grant number 101.04-2025.41. The research work of M. Ramezan-Nassab is supported in part by a grant from IPM (Grant No. 1405160117).

\bigskip

\noindent{\bf Declarations.} Our statements here are the following ones:

\begin{itemize}
\item {\bf Ethical Declarations and Approval:} The authors have no competing interests to declare that are relevant to the content of this article.

\item {\bf Competing Interests:} The authors declare no any conflict of interest.

\item {\bf Availability of Data and Materials:} Data sharing is not applicable to this article as no data-sets or any other materials were generated or analyzed during the current study.
\end{itemize}


\end{document}